\newtheorem{thm}{Theorem}[section]
\newtheorem{defini}{Definition}[section]
\newtheorem{rem}{Remark}[section]
\newtheorem{lem}{Lemma}[section]
\newtheorem{prop}{Proposition}[section]
\newtheorem{ex}{Example} [section]
\numberwithin{equation}{section}
\def \R {\mathbb{R} }
\begin{document}
\title[Nonlocal problems with Neumann and Robin boundary condition]{Nonlocal problems with Neumann and Robin boundary condition in fractional Musielak-Sobolev spaces}
\author[E. Azroul, A. Benkirane,  and  M. Srati]
{E. Azroul$^1$, A. Benkirane$^2$  and  M. Srati$^3$}
\address{E. Azroul, A. Benkirane and  M. Srati\newline
 Sidi Mohamed Ben Abdellah
 University,
 Faculty of Sciences Dhar El Mahraz, Laboratory of Mathematical Analysis and Applications, Fez, Morocco.}
\email{$^1$elhoussine.azroul@gmail.com}
\email{$^2$abd.benkirane@gmail.com}
\email{$^3$mohammed.srati@usmba.ac.ma}
\subjclass[2010]{46E35, 35R11,  35J20, 47G20.}
\keywords{ Fractional Musielak-Sobolev spaces, Nonlocal problems, Neumann boundary condition, Robin boundary condition, Direct variational method.}
\maketitle
\begin{abstract}
 In this paper, we develop some properties of the $a_{x,y}(.)$-Neumann derivative for the fractional $a_{x,y}(.)$-Laplacian   operator. Therefore we prove the basic proprieties of the correspondent function spaces. In the second part of this paper, by means of Ekeland's variational principal and direct variational approach, we prove the existence of weak solutions for a nonlocal problem with nonhomogeneous Neumann and Robin boundary condition.  
\end{abstract}
\tableofcontents
\section{Introduction}\label{S1}
 In the last years, great attention has been devoted to the study of nonlinear problems involving nonlocal operators  in modular spaces. In particular, in the fractional Orlicz-Sobolev spaces $W^sL_\varPhi(\Omega)$ (see \cite{sr5,3,SRN1,SRN2,SRT,sr_mo,sal1,sal2}) and in the fractional Sobolev spaces  with variable exponents $W^{s,p(x,y)}(\Omega)$ (see \cite{SH,SH2,SRH,SS2020,athman,ku}). The study of variational problems where the modular function  satisfies  nonpolynomial growth conditions instead of having the usual $p$-structure arouses much interest in the development of applications to electrorheological fluids as an important class of non-Newtonian fluids (sometimes referred to as smart fluids). The electro-rheological fluids are characterized by their ability to drastically change the mechanical properties under the influence of an external electromagnetic field. A mathematical model of electro-rheological fluids was proposed by Rajagopal and Ruzicka (we refer the reader to \cite{maria1,maria2,e2} for more details). Another important application is related to image processing \cite{e3} where this kind of diffusion operator is used to underline the borders of the distorted image and to eliminate the noise. From a  mathematical standpoint, it is a hard task to show the existence of classical solutions, i.e., solutions which are continuously differentiable as many times as the order of the differential equations under consideration. However, the concept of weak solution is not enough to give a formulation of all problems and may not provide existence or stability properties.

The Neumann boundary condition, credited to the German mathematician Neumann, is also known as the boundary condition of the second kind. In this type of boundary condition, the value of the gradient of the dependent variable normal to the boundary, $\frac{\partial \phi}{\partial n}$, is prescribed on the boundary. 

In the last years, great attention has been devoted to the study of nonlocal problems with fractional Neumann boundary condition, In this contex, Dipierro, Ros-Oton, and Valdinoci, in \cite{N5}  introduce an extension for the classical Neumann condition $\frac{\partial \phi}{\partial n} = 0$ on $\partial\Omega$ consists in the nonlocal prescription

\begin{equation}\label{n1}
     \begin{aligned}
     \mathcal{N}^s_2u(x)= \int_{\Omega}  \dfrac{u(x)-u(y)}{|x-y|^{N+2s}}dy ,~~\forall x\in \R^N\setminus \Omega.
               \end{aligned}
\end{equation}
Other Neumann problems for the fractional Laplacian (or other nonlocal operators) were introduced in \cite{N1,N2,N3}. All these different Neumann problems for nonlocal operators recover the classical Neumann problem as a limit case, and most of them have clear probabilistic interpretations as well. 
An advantage of this approach (\ref{n1}) is that the problem has a variational structure. 

In \cite{N4}, Mugnai and Proietti Lippi  introduced an extension of (\ref{n1}) as following
\begin{equation}\label{n2}
     \begin{aligned}
     \mathcal{N}^s_pu(x)= \int_{\Omega}  \dfrac{|u(x)-u(y)|^{p-2}(u(x)-u(y))}{|x-y|^{N+ps}}dy ,~~\forall x\in \R^N\setminus \Omega,
               \end{aligned}
\end{equation}
$\mathcal{N}^s_p$ is the nonlocal normal $p$-derivative, or $p$-Neumann boundary condition and describes the natural Neumann
boundary condition in presence of the fractional $p$-Laplacian. It extends the notion of nonlocal normal derivative for the fractional Laplacian, i.e. for $p = 2$. In this situation, $p > 1$, $s \in (0, 1)$.

In fractional modular spaces,  Bahrouni,  Radulesc\u{u}, and  Winkert in \cite{N6} defined the following boundary condition
\begin{equation}\label{n3}
      \begin{aligned}
      \mathcal{N}^s_{p(x,.)}u(x)= \int_{\Omega}  \dfrac{|u(x)-u(y)|^{p(x,y)-2}(u(x)-u(y))}{|x-y|^{N+sp(x,y)} }dy,~~\forall x\in \R^N\setminus \Omega,
                \end{aligned}
\end{equation}

where $p: \R^{2N} \longrightarrow (1, +\infty)$ is a symmetric, continuous function bounded and  $p(.) = p(.,.)$. $\mathcal{N}^s_{p(x,.)}$   is the nonlocal normal $p(.,.)$-derivative [or $p(.,.)$-Neumann boundary condition] and describes the natural Neumann boundary condition in the presence of the fractional $p(.,.)$-Laplacian, (\ref{n2}) extends the notion of the nonlocal normal derivative  for the fractional $p$-Laplacian.

On other extention  of $p$-Neumann boundary condition, has proposed by Bahrouni and Salort in \cite{N7} as following
{\small$$
     \begin{aligned}
     \mathcal{N}^s_{a(.)}u(x)= \int_{\Omega} a\left( \dfrac{|u(x)-u(y)|}{|x-y|^s }\right)\dfrac{u(x)-u(y)}{|x-y|^s} \dfrac{dy}{|x-y|^{N+s}},~~\forall x\in \R^N\setminus \Omega,
               \end{aligned}
                $$ }
where $a = A'$ such that $A$ is a Young function   and $s \in (0, 1)$.
 
In this paper, we introduce the natural Neumann boundary condition in the presence of the fractional $a_{x,y}(.)$-Laplacian in fractional Musielak Sobolev spaces. Therefore  we are concerned with the existence of  weak solutions to the following Neumann-Robin problem
 $$\label{P}
 (\mathcal{P}_a)  \left\{ 
    \begin{array}{clclc}
 (-\Delta)^s_{a_{(x,.)}} u +\widehat{a}_x(|u|)u& = & \lambda f(x,u)   & \text{ in }& \Omega, \\\\
     \mathcal{N}^s_{a(x,.)}u+\beta(x)\widehat{a}_x(|u|)u & = & 0 \hspace*{0.2cm}  & \text{ in } & \R^N\setminus \Omega,
    \end{array}
    \right. 
 $$
  where $\Omega$ is an open bounded subset in $\R^N$, $N\geqslant 1$,   with Lipschitz boundary $\partial \Omega$, $0<s<1$, 
 $f: \Omega\times \R \longrightarrow \R$ is a Carath\'eodory function, $\beta\in L^{\infty}(\R^N\setminus \Omega)$ such that $\beta\geqslant 0$ in $\R^N\setminus \Omega$ and  $(-\Delta)^s_{a_{(x,y)}}$ is the nonlocal integro-differential operator of elliptic type defined as follows
    {\small  $$
               \begin{aligned}
               (-\Delta)^s_{a_{(x,.)}}u(x)=2\lim\limits_{\varepsilon\searrow 0} \int_{\R^N\setminus B_\varepsilon(x)} a_{(x,y)}\left( \dfrac{|u(x)-u(y)|}{|x-y|^s }\right)\dfrac{u(x)-u(y)}{|x-y|^s} \dfrac{dy}{|x-y|^{N+s} },
               \end{aligned}
                $$}  
  for all $x\in \R^N$, where $(x,y,t)\mapsto a_{(x,y)}(t):=a(x,y,t) : \overline{\Omega}\times\overline{\Omega}\times \R\longrightarrow \R$   is symmetric function :
 \begin{equation}\label{n4}
 a(x,y,t)=a(y,x,t) ~~ \forall(x,y,t)\in \overline{\Omega}\times\overline{\Omega}\times \R,\end{equation}
   and the function : $\varphi(.,.,.) : \overline{\Omega}\times\overline{\Omega}\times \R \longrightarrow \R$ defined by  
$$
  \varphi_{x,y}(t):=\varphi(x,y,t)= \left\{ 
          \begin{array}{clclc}
        a(x,y,|t|)t   & \text{ for }& t\neq 0, \\\\
          0  & \text{ for } & t=0,
          \end{array}
          \right. 
$$
is increasing homeomorphism from $\R$ onto itself. Let 
$$\varPhi_{x,y}(t):=\varPhi(x,y,t)=\int_{0}^{t}\varphi_{x,y}(\tau)d\tau~~\text{ for all } (x,y)\in \overline{\Omega}\times\overline{\Omega},~~\text{ and all } t\geqslant 0.$$  
Then, $\varPhi_{x,y}$ is a Musielak function (see \cite{mu}), that is
\begin{itemize}
\item  $\varPhi(x,y,.)$ is a $\varPhi$-function for every $(x,y)\in\overline{\Omega}\times\overline{\Omega}$, i.e.,   is continuous, nondecreasing function with $\varPhi(x,y,0)= 0$, $\varPhi(x,y,t)>0$ for $t>0$ and $\varPhi(x,y,t)\rightarrow \infty$ as $t\rightarrow \infty$.
\item For every $t\geqslant 0$, $\varPhi(.,.,t) : \overline{\Omega}\times\overline{\Omega} \longrightarrow \R$ is a measurable function.
\end{itemize}
Also, we take $ \widehat{a}_x(t):=\widehat{a}(x,t)=a_{(x,x)}(t)  ~~ \forall~ (x,t)\in \overline{\Omega}\times \R$. Then the function $\widehat{\varphi}(.,.) : \overline{\Omega}\times \R \longrightarrow \R$ defined  by : 
  $$
     \widehat{\varphi}_{x}(t):=\widehat{\varphi}(x,t)= \left\{ 
          \begin{array}{clclc}
        \widehat{a}(x,|t|)t   & \text{ for }& t\neq 0, \\\\
          0  & \text{ for } & t=0,
          \end{array}
          \right. 
       $$
is increasing homeomorphism from $\R$ onto itself. If we set 
\begin{equation}\label{phi}
\widehat{\varPhi}_{x}(t):=\widehat{\varPhi}(x,t)=\int_{0}^{t}\widehat{\varphi}_{x}(\tau)d\tau ~~\text{ for all}~~ t\geqslant 0.
\end{equation}  
Then, $\widehat{\varPhi}_{x}$ is also a Musielak function.
 
Furthermore, $\mathcal{N}^s_{a(x,.)}$ is defined by
{\small$$
     \begin{aligned}
     \mathcal{N}^s_{a(x,.)}u(x)= \int_{\Omega} a_{(x,y)}\left( \dfrac{|u(x)-u(y)|}{|x-y|^s }\right)\dfrac{u(x)-u(y)}{|x-y|^s} \dfrac{dy}{|x-y|^{N+s}},~~\forall x\in \R^N\setminus \Omega,
               \end{aligned}
                $$ }
                  denotes $a_{(x,.)}-$Neumann boundary condition and present the natural Neumann boundary condition for $(-\Delta)^s_{a_{(x,.)}}$ in fractional Musielak-Sobolev space.  
                  
                  If we take, $a_{x,y}(t)=t^{p(x,y)-2}$, this work extends the notion of the nonlocal normal derivative for the fractional $p(.,.)$-Laplacian, and if $a_{(x,y)}(t)=a(t)$, i.e. the function $a$ is independent of variables $x,y$ so this work extends  the notion of the nonlocal normal derivative  for the fractional $a(.)$-Laplacian, and therefore this work extends the notion of the nonlocal normal derivative for the fractional Laplacian operator $(\ref{n1})$ and for fractional $p$-Laplacian operator $(\ref{n2})$.
                  
       This paper is organized as follows, In Section \ref{S1}, we  set  the problem  \hyperref[P]{$(\mathcal{P}_{a})$} and the related hypotheses. Moreover,  we are introduced the new  Neumann boundary condition  associated  to fractional $a_{x,y}(.)$-Laplacian operator.  The Section \ref{S2}, is devoted to recall
   some properties of fractional Musielak-Sobolev spaces. In section \ref{S3}, we introduce the corresponding function space for weak solutions of \hyperref[P]{$(\mathcal{P}_{a})$}, and we prove some properties, and state
   the corresponding Green formula for problems such as \hyperref[P]{$(\mathcal{P}_{a})$}. In section \ref{S4},    by means of Ekeland's variational principle and direct variational approach,
       we obtain the existence  of $\lambda^*>\lambda_*>0$ such that for any $\lambda\in(0,\lambda_*)\cup [\lambda^*,\infty)$, problem \hyperref[P]{$(P_a)$} has a nontrivial weak solution.  Finally, in Section \ref{S5}, we present some examples which illustrate our  results.              
        \section{Preliminaries results}\label{S2}                      
To deal with this situation we define the fractional Musielak-Sobolev space to
investigate Problem \hyperref[P]{$(\mathcal{P}_{a})$}. Let us recall the definitions and some elementary
properties of this spaces. We refer the reader to \cite{benkirane,benkirane2} for further reference
and for some of the proofs of the results in this section.

 For the function $\widehat{\varPhi}_x$ given in (\ref{phi}), we introduce the Musielak space as follows
  $$L_{\widehat{\varPhi}_x} (\Omega)=\left\lbrace u : \Omega \longrightarrow \R \text{ mesurable }: \int_\Omega\widehat{\varPhi}_x(\lambda |u(x)|)dx < \infty \text{ for some } \lambda>0 \right\rbrace. $$
The space $L_{\widehat{\varPhi}_x} (\Omega)$ is a Banach space endowed with the Luxemburg norm 
$$||u||_{\widehat{\varPhi}_x}=\inf\left\lbrace \lambda>0 \text{ : }\int_\Omega\widehat{\varPhi}_x\left( \dfrac{|u(x)|}{\lambda}\right) dx\leqslant 1\right\rbrace. $$
 The conjugate function of $\varPhi_{x,y}$ is defined by $\overline{\varPhi}_{x,y}(t)=\int_{0}^{t}\overline{\varphi}_{x,y}(\tau)d\tau~~\text{ for all } (x,y)\in\overline{\Omega}\times\overline{\Omega}~~ \text{ and all } t\geqslant 0$, where $\overline{\varphi}_{x,y} : \R\longrightarrow \R$ is given by $\overline{\varphi}_{x,y}(t):=\overline{\varphi}(x,y,t)=\sup\left\lbrace s \text{ : } \varphi(x,y,s)\leqslant t\right\rbrace.$ Furthermore, we have the following H\"older type inequality
  \begin{equation}
   \left| \int_{\Omega}uvdx\right| \leqslant 2||u||_{\widehat{\varPhi}_x}||v||_{\overline{\widehat{\varPhi}}_x}\hspace*{0.5cm} \text{ for all } u \in L_{\widehat{\varPhi}_x}(\Omega)  \text{ and } v\in L_{\overline{\widehat{\varPhi}}_x}(\Omega).
   \end{equation}
    Throughout this paper, we assume that there exist two positive constants $\varphi^+$ and $\varphi^-$ such that 
\begin{equation}\label{v1}\tag{$\varPhi_1$}
    1<\varphi^-\leqslant\dfrac{t\varphi_{x,y}(t)}{\varPhi_{x,y}(t)}\leqslant \varphi^+<+\infty\text{ for all } (x,y)\in\overline{\Omega}\times\overline{\Omega}~~\text{ and all } t\geqslant 0. \end{equation}
    This relation implies  that
    \begin{equation}\label{A2}
        1<\varphi^-\leqslant \dfrac{t\widehat{\varphi}_{x}(t)}{\widehat{\varPhi}_{x}(t)}\leqslant\varphi^+<+\infty,\text{ for all } x\in\overline{\Omega}~~\text{ and all } t\geqslant 0.\end{equation}
             It follows that  $\varPhi_{x,y}$ and $\widehat{\varPhi}_{x}$ satisfy the global $\Delta_2$-condition (see \cite{ra}), written $\varPhi_{x,y}\in \Delta_2$ and $\widehat{\varPhi}_{x}\in \Delta_2$, that is,
    \begin{equation}\label{r1}
    \varPhi_{x,y}(2t)\leqslant K_1\varPhi_{x,y}(t)~~ \text{ for all } (x,y)\in\overline{\Omega}\times\overline{\Omega},~~\text{ and  all } t\geqslant 0,
    \end{equation} and
    \begin{equation}\label{rr1}
        \widehat{\varPhi}_{x}(2t)\leqslant K_2\widehat{\varPhi}_{x}(t) ~~\text{ for any } x\in\overline{\Omega},~~\text{ and  all } t\geqslant 0,
        \end{equation}
 where $K_1$ and $K_2$ are two positive constants. 
 
 Furthermore, we assume that $\varPhi_{x,y}$ satisfies the following condition
  \begin{equation}\label{f2.}\tag{$\varPhi_2$}
  \text{ the function } [0, \infty) \ni t\mapsto \varPhi_{x,y}(\sqrt{t}) \text{ is convex. }
  \end{equation}
 
    \begin{defini}
     Let $A_x(t)$, $B_x(t): \R^+\times \Omega\longrightarrow \R^+$ be two Musielak functions. 
            $A_x$ is stronger $($resp essentially stronger$)$ than $B_x$,  $A_x\succ B_x$ (resp $A_x\succ\succ B_x$) in symbols, if for almost every $x\in \overline{\Omega}$ 
          $$B(x,t)\leqslant A( x,a t),~~ t\geqslant t_0\geqslant 0, $$
          for some $($resp for each$)$ $a>0$ and $t_0$ (depending on $a$).
    \end{defini}
    
   Now, due to the nonlocality of the operator $(-\Delta)^s_{a_{(x,.)}}$,  we  define the new fractional Musielak-Sobolev space as introduce in \cite{benkirane} as follows 
    \begingroup\makeatletter\def\f@size{9}\check@mathfonts$$ W^s{L_{\varPhi_{x,y}}}(\Omega)=\Bigg\{u\in L_{\widehat{\varPhi}_x}(\Omega) :  \int_{\Omega} \int_{\Omega} \varPhi_{x,y}\left( \dfrac{\lambda| u(x)- u(y)|}{|x-y|^s}\right) \dfrac{dxdy}{|x-y|^N}< \infty \text{ for some } \lambda >0 \Bigg\}.
$$\endgroup
This space can be equipped with the norm
\begin{equation}\label{r2}
||u||_{s,\varPhi_{x,y}}=||u||_{\widehat{\varPhi}_x}+[u]_{s,\varPhi_{x,y}},
\end{equation}
where $[.]_{s,\varPhi_{x,y}}$ is the Gagliardo seminorm defined by 
$$[u]_{s,\varPhi_{x,y}}=\inf \Bigg\{\lambda >0 :  \int_{\Omega} \int_{\Omega} \varPhi_{x,y}\left( \dfrac{|u(x)- u(y)|}{\lambda|x-y|^s}\right) \dfrac{dxdy}{|x-y|^N}\leqslant 1 \Bigg\}.
$$

\begin{thm}$($\cite{benkirane}$)$.
       Let $\Omega$ be an open subset of $\R^N$, and let $s\in (0,1)$. The space $W^sL_{\varPhi_{x,y}}(\Omega)$ is a Banach space with respect to the norm $(\ref{r2})$, and a  separable $($resp. reflexive$)$ space if and only if $\varPhi_{x,y} \in \Delta_2$ $($resp. $\varPhi_{x,y}\in \Delta_2 $ and $\overline{\varPhi}_{x,y}\in \Delta_2$$)$. Furthermore,
              if   $\varPhi_{x,y} \in \Delta_2$ and $\varPhi_{x,y}(\sqrt{t})$ is convex, then  the space $W^sL_{\varPhi_{x,y}}(\Omega)$ is an uniformly convex space.\end{thm}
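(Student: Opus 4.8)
The plan is to reduce every assertion to the already-known structure theory of Musielak spaces (the results of \cite{benkirane} on $L_{\widehat{\varPhi}_x}(\Omega)$ and on the Musielak space $L_{\varPhi_{x,y}}(\Omega\times\Omega,d\mu)$ built on the measure $d\mu=\frac{dxdy}{|x-y|^N}$) by means of a single isometric embedding. I would work with the operator
$$T\colon W^sL_{\varPhi_{x,y}}(\Omega)\longrightarrow L_{\widehat{\varPhi}_x}(\Omega)\times L_{\varPhi_{x,y}}(\Omega\times\Omega,d\mu),\qquad Tu=\big(u,\,D_su\big),$$
where $D_su(x,y)=\frac{u(x)-u(y)}{|x-y|^s}$. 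If the target is endowed with the norm adding the two Luxemburg norms of its components, then $T$ is a linear isometry onto its image by the very definition of the norm $(\ref{r2})$. Thus it suffices to show that the image of $T$ is a \emph{closed} subspace, after which completeness, separability, reflexivity and uniform convexity will each be inherited from the product space.

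First I would settle the Banach property. Each factor $L_{\widehat{\varPhi}_x}(\Omega)$ and $L_{\varPhi_{x,y}}(\Omega\times\Omega,d\mu)$ is a Banach space by the Musielak space theory, hence so is their product. To see that the image is closed, take a sequence $u_n$ with $Tu_n=(u_n,D_su_n)\to(v,w)$ in the product, so that $u_n\to v$ in $L_{\widehat{\varPhi}_x}(\Omega)$ and $D_su_n\to w$ in $L_{\varPhi_{x,y}}(\Omega\times\Omega,d\mu)$. Because $\varPhi_{x,y},\widehat{\varPhi}_x\in\Delta_2$ (a consequence of $(\ref{v1})$--$(\ref{A2})$), norm convergence is equivalent to modular convergence, which yields subsequences converging almost everywhere; hence $u_n\to v$ a.e.\ in $\Omega$ and $D_su_n\to w$ a.e.\ on $\Omega\times\Omega$. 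Comparing the two limits gives $w(x,y)=\frac{v(x)-v(y)}{|x-y|^s}=D_sv(x,y)$ a.e., so $(v,w)=Tv$ lies in the image. The image is therefore closed and $W^sL_{\varPhi_{x,y}}(\Omega)$ is complete.

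For the separability and reflexivity equivalences, the \emph{if} directions are immediate from the embedding: a finite product of separable (resp.\ reflexive) Banach spaces is separable (resp.\ reflexive), and both properties pass to closed subspaces, so $\varPhi_{x,y}\in\Delta_2$ makes the two factors — and therefore $W^sL_{\varPhi_{x,y}}(\Omega)$ — separable, while $\varPhi_{x,y},\overline{\varPhi}_{x,y}\in\Delta_2$ makes them reflexive (here I quote the Musielak space characterizations from \cite{benkirane}). The \emph{only if} directions are the delicate point: one must exhibit an isometric copy of the modular space $L_{\varPhi_{x,y}}$ inside $W^sL_{\varPhi_{x,y}}(\Omega)$ through which a failure of the $\Delta_2$-condition destroys separability or reflexivity, so that these properties of the Sobolev space in turn force the $\Delta_2$ hypotheses.

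Finally, for uniform convexity I would use that, when $\varPhi_{x,y}\in\Delta_2$ and $t\mapsto\varPhi_{x,y}(\sqrt t)$ is convex (hypothesis $(\ref{f2.})$), each Musielak factor is uniformly convex via a Clarkson-type modular inequality. \textbf{The main obstacle is here}: the sum norm $(\ref{r2})$ is an $\ell^1$-type combination of the two component norms, and an $\ell^1$-product of uniformly convex spaces need not be uniformly convex. I would circumvent this by replacing the combined norm with an equivalent $\ell^2$-type combination on the product — equivalently, by proving that the combined modular $u\mapsto\int_\Omega\widehat{\varPhi}_x(|u|)\,dx+\iint_{\Omega\times\Omega}\varPhi_{x,y}(D_su)\,d\mu$ is uniformly convex and then invoking $\Delta_2$ to transfer this to the associated Luxemburg norm — so that the product is uniformly convex and the closed subspace isometric to $W^sL_{\varPhi_{x,y}}(\Omega)$ inherits the property. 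The recurring analytic input is the $\Delta_2$-control $(\ref{v1})$--$(\ref{A2})$, which both yields a.e.-convergent subsequences from norm convergence and upgrades modular uniform convexity to norm uniform convexity.
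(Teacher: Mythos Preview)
The paper does not prove this theorem; it is quoted from \cite{benkirane} as a preliminary result, so there is no in-paper proof to compare against. Your isometric-embedding strategy is nonetheless exactly the device the present paper employs later, in the proof that $(X,\|\cdot\|_X)$ is a reflexive Banach space: there the authors map $u\mapsto(u,u,D^su)$ into a product of Musielak spaces and read off reflexivity from the product. So your approach is in the same spirit as the paper's own arguments for the analogous space.

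On the substance of your sketch: the completeness argument and the \emph{if} directions for separability and reflexivity are fine. You are candid that the \emph{only if} directions are the delicate part, and indeed your sketch does not carry them out --- saying one ``must exhibit an isometric copy of $L_{\varPhi_{x,y}}$ inside $W^sL_{\varPhi_{x,y}}(\Omega)$'' is the right idea but not a proof; you would need to actually construct such a copy (or otherwise transfer a non-separable or non-reflexive subspace of the Musielak space into the Sobolev space). For uniform convexity you correctly flag the $\ell^1$-sum obstruction and propose the standard remedy (prove uniform convexity of the combined modular, then use $\Delta_2$ to pass to the Luxemburg norm); this is the right route, and it is precisely the role of hypothesis $(\ref{f2.})$ together with Lamperti's inequality \cite{Lam} that the paper invokes elsewhere (see the proof of Lemma~\ref{4.4.}).
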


           \begin{defini}$($\cite{benkirane}$)$.
           We say that $\varPhi_{x,y}$ satisfies the fractional boundedness condition, written $\varPhi_{x,y}\in \mathcal{B}_{f}$, if
         \begin{equation}\tag{$\varPhi_3$}
        \label{v3}         
           \sup\limits_{(x,y)\in \overline{\Omega}\times\overline{\Omega}}\varPhi_{x,y}(1)<\infty.  \end{equation}
           \end{defini}
           \begin{thm}  $($\cite{benkirane}$)$.    \label{TT}
                       Let $\Omega$ be an open subset of $\R^N$,  and  $0<s<1$. Assume that  $\varPhi_{x,y}\in \mathcal{B}_{f}$. 
                       Then,
                       $$C^2_0(\Omega)\subset W^sL_{\varPhi_{x,y}}(\Omega).$$
                  \end{thm}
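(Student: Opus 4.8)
The strategy is to check directly that every $u\in C^2_0(\Omega)$ satisfies the two defining conditions of $W^sL_{\varPhi_{x,y}}(\Omega)$: that $u\in L_{\widehat{\varPhi}_x}(\Omega)$, and that the Gagliardo modular $\int_\Omega\int_\Omega \varPhi_{x,y}\!\left(\frac{\lambda|u(x)-u(y)|}{|x-y|^s}\right)\frac{dx\,dy}{|x-y|^N}$ is finite for some $\lambda>0$. Fix such a $u$, which we may assume is not identically zero, and denote by $K=\mathrm{supp}\,u$ its compact support, by $M=\|u\|_{\infty}$ and by $L=\|\nabla u\|_{\infty}$. Since $u\in C^2_0(\Omega)$, its extension by zero lies in $C^2(\R^N)$, so $M,L$ are finite and positive and the mean value theorem gives the two pointwise bounds $|u(x)-u(y)|\le 2M$ and $|u(x)-u(y)|\le L|x-y|$ for all $x,y\in\R^N$. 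From the hypothesis $\varPhi_{x,y}\in\mathcal{B}_{f}$ I set $C_0:=\sup_{(x,y)}\varPhi_{x,y}(1)<\infty$; since $\widehat{\varPhi}_x(1)=\varPhi_{x,x}(1)$, also $\sup_x\widehat{\varPhi}_x(1)\le C_0$.

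For the first condition I would take $\lambda=1/M$: then $\lambda|u(x)|\le 1$, so by monotonicity of $\widehat{\varPhi}_x$ and the fact that $u$ vanishes off $K$, $\int_\Omega\widehat{\varPhi}_x(\lambda|u(x)|)\,dx=\int_K\widehat{\varPhi}_x(\lambda|u(x)|)\,dx\le C_0|K|<\infty$, which places $u$ in $L_{\widehat{\varPhi}_x}(\Omega)$.

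The main tool for the second condition is a growth estimate extracted from $(\varPhi_1)$. Since $\varphi_{x,y}=\varPhi_{x,y}'$, condition \eqref{v1} reads $\frac{\varphi^-}{t}\le \frac{\varPhi_{x,y}'(t)}{\varPhi_{x,y}(t)}\le\frac{\varphi^+}{t}$, and integrating between $t$ and $1$ yields, for $0<t\le 1$, the bound $\varPhi_{x,y}(t)\le \varPhi_{x,y}(1)\,t^{\varphi^-}\le C_0\,t^{\varphi^-}$. This converts control of the argument of $\varPhi_{x,y}$ into an integrable power of $|x-y|$.

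Finally I estimate the double integral. Using the symmetry \eqref{n4} of $\varPhi_{x,y}$ and that the integrand vanishes when both $x,y\notin K$, it suffices to bound twice the integral over $\{x\in K,\ y\in\Omega\}$, which I split according to $|x-y|\le 1$ and $|x-y|>1$. Choosing $\lambda=\min\{1/L,\,1/(2M)\}$ keeps the argument $\frac{\lambda|u(x)-u(y)|}{|x-y|^s}$ below $1$ in both regions (via the Lipschitz bound when $|x-y|\le 1$, via the $L^\infty$ bound when $|x-y|>1$), so the estimate of the previous paragraph applies. On the near region the integrand is bounded by $C_0(\lambda L)^{\varphi^-}|x-y|^{(1-s)\varphi^--N}$, whose $y$-integral over $\{|x-y|\le1\}$ converges because $(1-s)\varphi^->0$; on the far region it is bounded by $C_0(2M\lambda)^{\varphi^-}|x-y|^{-s\varphi^--N}$, whose $y$-integral over $\{|x-y|>1\}$ converges because $s\varphi^->0$. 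Integrating the resulting constants over the bounded set $K$ gives a finite total, proving the inclusion. The one delicate point is the far region: there the bare kernel $|x-y|^{-N}$ is not integrable at infinity, and it is precisely the decay $\varPhi_{x,y}(t)\le C_0 t^{\varphi^-}$ coming from $(\varPhi_1)$, together with the internal factor $|x-y|^{-s}$ and the compact support of $u$, that restores convergence.
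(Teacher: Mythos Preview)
Your argument is correct. Note that the paper does not actually supply a proof of this theorem: it is quoted from \cite{benkirane} as a preliminary result, so there is no in-paper proof to compare against. Your approach---verifying directly the two defining conditions of $W^sL_{\varPhi_{x,y}}(\Omega)$, localising via the compact support of $u$, and splitting the Gagliardo integral into the regions $|x-y|\le 1$ and $|x-y|>1$---is the natural one and is what one would expect to find in the cited reference.

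One small observation: the growth bound $\varPhi_{x,y}(t)\le C_0\,t^{\varphi^-}$ for $0<t\le 1$ that you extract from \eqref{v1} is sharper than needed. Mere convexity of $\varPhi_{x,y}$ together with $\varPhi_{x,y}(0)=0$ already yields $\varPhi_{x,y}(t)\le t\,\varPhi_{x,y}(1)\le C_0\,t$ for $t\in[0,1]$, and the exponent $1$ in place of $\varphi^-$ is enough for both integrability checks (since $1-s>0$ for the near part and $s>0$ for the far part). This slightly streamlined version shows the inclusion follows from the hypothesis $\varPhi_{x,y}\in\mathcal{B}_f$ alone, without appealing to the standing assumption \eqref{v1}.
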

       
   
                     For any $u \in W^sL_{\varPhi_{x,y}}(\Omega)$, we define the modular function on  $W^sL_{\varPhi_{x,y}}(\Omega)$  as follows  
                    \begin{equation}\label{modN}
        \varPsi(u)=\displaystyle\int_{\Omega} \int_{\Omega} \varPhi_{x,y}\left( \dfrac{ |u(x)- u(y)|}{|x-y|^s}\right) \dfrac{dxdy}{|x-y|^N}+\int_{\Omega}\widehat{\varPhi}_{x}\left( |u(x)|\right) dx. \end{equation}  
                     
   An important role in manipulating  the fractional Musielak-Sobolev spaces is played by the modular function $(\ref{modN})$ . It is worth noticing that the relation between the norm and the modular shows an equivalence between the topology defined by the norm and that defined by the modular.                                     
        \begin{prop}$($\cite{benkirane}$)$.\label{mod}
         Assume that (\ref{v1}) is satisfied. Then, for any $u \in W^sL_{\varPhi_{x,y}}(\Omega)$, the following relations hold true:
           \begin{equation}\label{mod1}
     ||u||_{s,\varPhi_{x,y}}>1\Longrightarrow      ||u||_{s,\varPhi_{x,y}}^{\varphi^-} \leqslant  \varPsi(u)\leqslant  ||u||_{s,\varPhi_{x,y}}^{\varphi^+},
           \end{equation}
           \begin{equation}\label{mod2}
                ||u||_{s,\varPhi_{x,y}}<1\Longrightarrow    ||u||_{s,\varPhi_{x,y}}^{\varphi^+} \leqslant  \varPsi(u)\leqslant  ||u||_{s,\varPhi_{x,y}}^{\varphi^-}. \end{equation}
           \end{prop}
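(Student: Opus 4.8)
The engine of the proof is a pair of scaling inequalities for $\varPhi_{x,y}$ and $\widehat{\varPhi}_x$ that repackage the growth condition (\ref{v1}). Since $\varphi_{x,y}=\varPhi_{x,y}'$, condition (\ref{v1}) reads $\varphi^-\le \tau\,(\log\varPhi_{x,y})'(\tau)\le\varphi^+$ for $\tau>0$, and integrating $\frac{d}{d\tau}\log\varPhi_{x,y}(\tau)=\frac1\tau\cdot\frac{\tau\varphi_{x,y}(\tau)}{\varPhi_{x,y}(\tau)}$ over the interval with endpoints $t$ and $\lambda t$ would yield, uniformly in $(x,y)$ because $\varphi^\pm$ are constants,
\begin{equation*}
\lambda^{\varphi^-}\varPhi_{x,y}(t)\le \varPhi_{x,y}(\lambda t)\le \lambda^{\varphi^+}\varPhi_{x,y}(t)\quad(\lambda\ge 1),
\end{equation*}
together with the reversed chain $\lambda^{\varphi^+}\varPhi_{x,y}(t)\le\varPhi_{x,y}(\lambda t)\le\lambda^{\varphi^-}\varPhi_{x,y}(t)$ for $0<\lambda\le1$; the identical statements hold for $\widehat{\varPhi}_x$ via (\ref{A2}). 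Taking $\lambda=2$ here also recovers the $\Delta_2$-bounds (\ref{r1})--(\ref{rr1}).

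Next I would push these pointwise bounds through the modular (\ref{modN}). Applying the scaling inequality to each integrand separately, namely to $\varPhi_{x,y}\big(\lambda|u(x)-u(y)|/|x-y|^s\big)$ in the double integral and to $\widehat{\varPhi}_x(\lambda|u(x)|)$ in the single integral, and then integrating against the measures $|x-y|^{-N}\,dx\,dy$ and $dx$, gives the homogeneity-type estimate
\begin{equation*}
\lambda^{\varphi^-}\varPsi(u)\le \varPsi(\lambda u)\le \lambda^{\varphi^+}\varPsi(u)\quad(\lambda\ge 1),
\end{equation*}
and the reversed inequalities for $0<\lambda\le 1$. This is the only place where the two components of $\varPsi$ interact, and because the exponents are the same for both terms the bound survives the summation.

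The final ingredient is the norm--modular link $\varPsi\big(u/\|u\|_{s,\varPhi_{x,y}}\big)=1$ for $u\neq 0$. Granting this, set $v=u/\|u\|_{s,\varPhi_{x,y}}$ so that $\varPsi(v)=1$ and write $u=\|u\|_{s,\varPhi_{x,y}}\,v$. If $\|u\|_{s,\varPhi_{x,y}}>1$, the modular scaling with $\lambda=\|u\|_{s,\varPhi_{x,y}}\ge1$ gives $\|u\|_{s,\varPhi_{x,y}}^{\varphi^-}\le\varPsi(u)\le\|u\|_{s,\varPhi_{x,y}}^{\varphi^+}$, which is (\ref{mod1}); if $\|u\|_{s,\varPhi_{x,y}}<1$, the reversed scaling with $\lambda<1$ produces (\ref{mod2}).

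The main obstacle is precisely the identity $\varPsi(u/\|u\|_{s,\varPhi_{x,y}})=1$. Here the $\Delta_2$-conditions (\ref{r1})--(\ref{rr1}) are indispensable: they make $\varPsi$ finite on the whole space and continuous along each ray $\lambda\mapsto\varPsi(\lambda u)$, so the Luxemburg-type infimum is attained on the unit modular level set. Concretely I would verify $\varPsi(u/\lambda)\to0$ as $\lambda\to\infty$ and $\varPsi(u/\lambda)\to\infty$ as $\lambda\to0^+$ (both directly from the scaling inequalities), and then invoke continuity to pin down the unique $\lambda$ with $\varPsi(u/\lambda)=1$. The subtle point requiring care is that, for the exponents $\varphi^\pm$ to come out sharp, the relevant norm must be read through this modular characterization; reconciling that characterization with the additive splitting displayed in (\ref{r2}) is the delicate bookkeeping that I would handle last.
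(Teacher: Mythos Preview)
The paper does not supply a proof here; the result is quoted from \cite{benkirane}. Your argument---integrating the logarithmic derivative in (\ref{v1}) to obtain the power scaling $\lambda^{\varphi^-}\varPhi_{x,y}(t)\le\varPhi_{x,y}(\lambda t)\le\lambda^{\varphi^+}\varPhi_{x,y}(t)$ for $\lambda\ge1$ (reversed for $0<\lambda\le1$), lifting this to the modular $\varPsi$, and then evaluating at $u/\|u\|$ using $\varPsi(u/\|u\|)=1$---is exactly the standard route and is correct for the Luxemburg norm attached to $\varPsi$.

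Your closing caveat is not cosmetic bookkeeping but a genuine issue. As written in (\ref{r2}), $\|u\|_{s,\varPhi_{x,y}}$ is the \emph{sum} of two separate Luxemburg norms, not the Luxemburg norm of $\varPsi$; for that sum norm the identity $\varPsi(u/\|u\|_{s,\varPhi_{x,y}})=1$ fails, and in fact the sharp lower bound in (\ref{mod1}) cannot hold in general (one can arrange $\|u\|_{s,\varPhi_{x,y}}>1$ while $\varPsi(u)<1$ by taking each summand slightly below $1$, since then each piece of the modular is at most that summand raised to $\varphi^->1$). The statement should therefore be read with the Luxemburg norm $\inf\{\lambda>0:\varPsi(u/\lambda)\le1\}$, which is equivalent to (\ref{r2}) (compare Proposition~\ref{rem} for the analogous equivalence on $X$) and is the norm actually used in the applications downstream. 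Under that reading your proof goes through without further work; there is no additional reconciliation to perform.
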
  
  We denote by $\widehat{\varPhi}_{x}^{-1}$ the inverse function of $\widehat{\varPhi}_{x}$ which satisfies the following conditions:
       \begin{equation}\label{15}
       \int_{0}^{1} \dfrac{\widehat{\varPhi}_{x}^{-1}(\tau)}{\tau^{\frac{N+s}{N}}}d\tau<\infty~~ \text{ for all } x\in \overline{\Omega},
       \end{equation}
       
       \begin{equation}\label{16n}
       \int_{1}^{\infty} \dfrac{\widehat{\varPhi}_{x}^{-1}(\tau)}{\tau^{\frac{N+s}{N}}}d\tau=\infty ~~\text{ for all }x\in \overline{\Omega}.
       \end{equation}
      Note that, if $\varphi_{x,y}(t)=|t|^{p(x,y)-1}$, then (\ref{15}) holds precisely when $sp(x,y)<N$ for all $(x,y)\in \overline{\Omega}\times \overline{\Omega}$.\\
       If (\ref{16n}) is satisfied, we define the inverse  Musielak conjugate function of $\widehat{\varPhi}_x$ as follows
       \begin{equation}\label{17}
       (\widehat{\varPhi}^*_{x,s})^{-1}(t)=\int_{0}^{t}\dfrac{\widehat{\varPhi}_{x}^{-1}(\tau)}{\tau^{\frac{N+s}{N}}}d\tau.
       \end{equation}
        \begin{thm}\cite{benkirane2}\label{3.4}
      Let $\Omega$  be a bounded open
       subset of  $\R^N$ with $C^{0,1}$-regularity 
         and bounded boundary. If $(\ref{15})$ and  $(\ref{16n})$  hold, then 
      \begin{equation}\label{18}
       W^s{L_{\varPhi_{x,y}}}(\Omega)\hookrightarrow L_ {\widehat{\varPhi}^*_{x,s}}(\Omega).
      \end{equation}
     \end{thm}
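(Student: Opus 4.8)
The goal is to produce a constant $C>0$, independent of $u$, realising the norm bound $\|u\|_{L_{\widehat{\varPhi}^*_{x,s}}(\Omega)}\leqslant C\,\|u\|_{s,\varPhi_{x,y}}$, and my plan is to recast this as a single modular inequality and then to transplant the Donaldson--Trudinger argument for Orlicz--Sobolev imbeddings to the variable Musielak function $\widehat{\varPhi}_x$, with the first-order gradient replaced by the Gagliardo difference quotient. First I would normalise: by the definition of the Luxemburg norm on $L_{\widehat{\varPhi}^*_{x,s}}(\Omega)$ together with the norm--modular equivalence of Proposition \ref{mod}, it suffices to find $C>0$ with
$$\int_{\Omega}\widehat{\varPhi}^*_{x,s}\!\left(\frac{|u(x)|}{C}\right)dx\leqslant 1\qquad\text{whenever}\qquad \varPsi(u)\leqslant 1 .$$
Since $\varPsi(u)$ simultaneously controls the Gagliardo double integral and the term $\int_{\Omega}\widehat{\varPhi}_x(|u|)\,dx$, and since the $\Delta_2$-bounds (\ref{r1})--(\ref{rr1}) let me reabsorb any fixed multiplicative constant into $C$, the whole difficulty is concentrated in one potential estimate.

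The core step is a fractional pointwise representation. Using the $C^{0,1}$-regularity of $\partial\Omega$ to extend $u$ and to compare it with local averages on balls $B\subset\Omega$, I would derive, for a.e.\ $x\in\Omega$, a Riesz-potential bound
$$|u(x)-u_B|\leqslant c\int_{\Omega}\frac{|u(x)-u(y)|}{|x-y|^{s}}\,\frac{dy}{|x-y|^{N}},$$
the average $u_B$ contributing only a lower-order term that is absorbed by the $\int_{\Omega}\widehat{\varPhi}_x(|u|)\,dx$ part of $\varPsi$ thanks to the boundedness of $\Omega$. Feeding this into the definition (\ref{17}) of the inverse Sobolev conjugate, whose exponent $\frac{N+s}{N}$ is calibrated exactly for this purpose, a fractional Hardy--Littlewood--Sobolev (O'Neil-type) inequality, applied modularly with the variable function $\widehat{\varPhi}_x$, upgrades the potential bound into the modular inequality
$$\int_{\Omega}\widehat{\varPhi}^*_{x,s}\!\left(\frac{|u(x)|}{C}\right)dx\leqslant c'\int_{\Omega}\int_{\Omega}\varPhi_{x,y}\!\left(\frac{|u(x)-u(y)|}{|x-y|^{s}}\right)\frac{dx\,dy}{|x-y|^{N}}+c'\int_{\Omega}\widehat{\varPhi}_x(|u(x)|)\,dx=c'\,\varPsi(u).$$
Here the two hypotheses enter exactly: (\ref{15}) guarantees that $(\widehat{\varPhi}^*_{x,s})^{-1}$ is finite, hence that $\widehat{\varPhi}^*_{x,s}$ is a bona fide Musielak function, while (\ref{16n}) excludes the borderline/supercritical regime and keeps the target modular subcritical, so that the estimate above is meaningful.

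Since the right-hand side equals $c'\,\varPsi(u)\leqslant c'$, reabsorbing the constant $c'$ through the $\Delta_2$-bounds (\ref{r1})--(\ref{rr1}) and the two-sided bounds (\ref{A2}) yields the displayed modular inequality, and therefore the embedding (\ref{18}).

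The hard part will be the $x$-dependence of $\widehat{\varPhi}_x$. Unlike the constant-coefficient Orlicz--Sobolev setting, a Musielak function is invariant neither under translations nor under symmetric-decreasing rearrangement, so the rearrangement reduction to a one-dimensional Hardy inequality that underlies the cleanest proofs is unavailable, and the Hardy--Littlewood--Sobolev step must instead be carried out locally and \emph{uniformly in} $x$. Controlling the oscillation of $x\mapsto\widehat{\varPhi}_x$ in the space variable --- for which the structural bounds (\ref{v1}), (\ref{A2}) and, where needed, the fractional boundedness condition $\mathcal{B}_f$ of (\ref{v3}) are the natural tools --- while matching the \emph{variable} conjugate $\widehat{\varPhi}^*_{x,s}$ with uniform constants, is the delicate point on which the whole argument hinges.
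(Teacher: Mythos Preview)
The paper does not prove this statement at all: Theorem~\ref{3.4} is quoted verbatim from \cite{benkirane2} (the citation appears immediately after \verb|\begin{thm}|) and is used here as a black box, with no proof given in the present paper. There is therefore nothing to compare your proposal against within this manuscript.

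As for the proposal itself, it is a plausible high-level outline of the Donaldson--Trudinger strategy transplanted to the fractional Musielak setting, but it is not a proof. Two of the three central steps are asserted rather than established: the pointwise fractional potential bound you write down,
\[
|u(x)-u_B|\leqslant c\int_{\Omega}\frac{|u(x)-u(y)|}{|x-y|^{s}}\,\frac{dy}{|x-y|^{N}},
\]
is not obvious (and in this exact form is false without further averaging in $x$ or a supremum over balls), and the ``modular O'Neil / Hardy--Littlewood--Sobolev inequality for variable $\widehat{\varPhi}_x$'' that you invoke to pass from the potential bound to the target modular is precisely the heart of the matter. You yourself flag in the last paragraph that the $x$-dependence of $\widehat{\varPhi}_x$ blocks the rearrangement route and that the HLS step must be done ``locally and uniformly in $x$'', but you do not indicate how this is actually carried out, nor which regularity of $(x,t)\mapsto\widehat{\varPhi}_x(t)$ beyond (\ref{v1}), (\ref{A2}) and (\ref{v3}) is needed. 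In short: the skeleton is reasonable, but the load-bearing inequality is only named, not proved, so as written this would not be accepted as a proof of the embedding.
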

      \begin{thm}\cite{benkirane2}\label{th2.}
               Let $\Omega$  be a bounded open
                 subset of  $\R^N$ and  $C^{0,1}$-regularity 
                   with bounded boundary. If $(\ref{15})$ and  $(\ref{16n})$  hold, then the embedding
                \begin{equation}\label{27}
                 W^s{L_{\varPhi_{x,y}}}(\Omega)\hookrightarrow L_{B_x}(\Omega),
                \end{equation}
                is compact for all $B_x\prec\prec \widehat{\varPhi}^*_{x,s}$.
                \end{thm}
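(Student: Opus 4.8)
The plan is to deduce compactness from the continuous embedding of Theorem \ref{3.4} together with a uniform-integrability argument governed by the essential-stronger relation $B_x \prec\prec \widehat{\varPhi}^*_{x,s}$, closing the argument with Vitali's convergence theorem. It suffices to show that every bounded sequence $(u_n)_n \subset W^sL_{\varPhi_{x,y}}(\Omega)$ admits a subsequence converging in $L_{B_x}(\Omega)$. By Theorem \ref{3.4} the sequence is bounded in $L_{\widehat{\varPhi}^*_{x,s}}(\Omega)$, so in view of (\ref{mod1})--(\ref{mod2}) there is a constant $M>0$ with $\int_\Omega \widehat{\varPhi}^*_{x,s}(|u_n(x)|/M)\,dx \le C$ uniformly in $n$. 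The two ingredients I would combine are (i) almost everywhere convergence of a subsequence, obtained from a base-level compact embedding, and (ii) uniform integrability of the family $\{B(x,\lambda|u_n|)\}_n$ for every $\lambda>0$, obtained from the definition of $\prec\prec$.

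First I would extract an almost everywhere convergent subsequence. For this I would prove that the embedding $W^sL_{\varPhi_{x,y}}(\Omega)\hookrightarrow L_{\widehat{\varPhi}_x}(\Omega)$ is compact by the Riesz--Fr\'echet--Kolmogorov criterion adapted to Musielak spaces: the Gagliardo seminorm $[u_n]_{s,\varPhi_{x,y}}$ controls, uniformly in $n$, the modular of the translates $\tau_h u_n - u_n$ through a bound of order $|h|^s[u_n]_{s,\varPhi_{x,y}}$, which yields equicontinuity of the translations, while the boundedness in the finer space $L_{\widehat{\varPhi}^*_{x,s}}(\Omega)$ supplies the tightness and uniform integrability. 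Passing to a subsequence, $u_n \to u$ strongly in $L_{\widehat{\varPhi}_x}(\Omega)$, and hence, up to a further subsequence, $u_n \to u$ almost everywhere in $\Omega$. Since by Fatou's lemma $u$ also lies in $L_{\widehat{\varPhi}^*_{x,s}}(\Omega)$, after enlarging $M$ we may assume $\int_\Omega \widehat{\varPhi}^*_{x,s}(|u_n-u|/M)\,dx \le C$ uniformly in $n$.

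It remains to upgrade this to convergence in $L_{B_x}(\Omega)$. Fix $\lambda>0$. Since $B_x \prec\prec \widehat{\varPhi}^*_{x,s}$, for the value $a=1/(\lambda M)$ there is $t_0>0$ with $B(x,t)\le \widehat{\varPhi}^*_{x,s}(t/(\lambda M))$ for all $t\ge t_0$ and a.e. $x\in\Omega$. Splitting $\Omega$ according to whether $\lambda|u_n-u|$ is below or above $t_0$, the contribution of the first part is controlled by $B(x,t_0)$ on a set of arbitrarily small measure, while on the second part $B(x,\lambda|u_n-u|)\le \widehat{\varPhi}^*_{x,s}(|u_n-u|/M)$, whose integral is uniformly bounded; this shows that $\{B(x,\lambda|u_n-u|)\}_n$ is uniformly integrable. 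Combining this with the almost everywhere convergence from the previous step, Vitali's convergence theorem yields $\int_\Omega B(x,\lambda|u_n-u|)\,dx \to 0$. As $\lambda>0$ is arbitrary, the definition of the Luxemburg norm gives $\|u_n-u\|_{B_x}\to 0$, which is the desired compactness.

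The main obstacle is the base-level compactness of the second paragraph: making the Riesz--Fr\'echet--Kolmogorov criterion work inside the anisotropic, $(x,y)$-dependent modular requires a translation estimate for the Gagliardo seminorm and a tightness bound that are both sensitive to the $x$-dependence of $\varPhi_{x,y}$, and it is the condition $\varPhi_{x,y}\in\Delta_2$ together with (\ref{v1}) that keeps these estimates uniform in $n$. Once the almost everywhere convergent subsequence is secured, the essential-stronger relation performs the conceptual work of converting the merely continuous embedding of Theorem \ref{3.4} into a compact one, since the freedom to choose $a$ (equivalently $\lambda$) arbitrarily is exactly what delivers Luxemburg-norm convergence rather than only modular convergence.
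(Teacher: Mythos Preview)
The paper does not contain a proof of this statement: Theorem~\ref{th2.} is quoted from the companion article \cite{benkirane2} and is used here as a black box (it feeds into Proposition~\ref{N10} and Remark~\ref{rem1}). There is therefore no in-paper argument to compare your proposal against.

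As for the proposal itself, the overall architecture---continuous embedding from Theorem~\ref{3.4}, extraction of an a.e.\ convergent subsequence, then Vitali via the essential-stronger relation---is the standard route for results of this type and is sound in outline. Two places deserve tightening. First, the Riesz--Fr\'echet--Kolmogorov step is only gestured at: the claimed translation bound ``of order $|h|^s[u_n]_{s,\varPhi_{x,y}}$'' is not automatic when the modular depends on both $x$ and $y$, and a genuine proof requires either a mollification/extension argument using the $C^{0,1}$ regularity of $\partial\Omega$ or an appeal to an already established compact embedding into $L_{\widehat\varPhi_x}(\Omega)$. Second, in your uniform-integrability step you write that on the set $\{\lambda|u_n-u|\ge t_0\}$ the majorant $\widehat\varPhi^*_{x,s}(|u_n-u|/M)$ has ``uniformly bounded'' integral and conclude uniform integrability; boundedness in $L^1$ is not the same as uniform integrability. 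What actually makes the argument work is that $a$ in the relation $B(x,t)\le \widehat\varPhi^*_{x,s}(at)$ is \emph{arbitrary}: choosing $a\le 1/(\lambda M)$ and using convexity gives $B(x,\lambda|u_n-u|)\le a\lambda M\,\widehat\varPhi^*_{x,s}(|u_n-u|/M)$ on the large set, with coefficient $a\lambda M$ that can be made as small as desired, and the small-values set is handled by $\int_E B(x,t_0)\,dx\to 0$ as $|E|\to 0$ (which needs $x\mapsto B(x,t_0)\in L^1(\Omega)$, a mild regularity hypothesis one should state). With these two points made precise your argument goes through.
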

   
        Finally, the proof of our existence result is based on the following Ekeland's variational principle theorem and direct variational approach.
   \begin{thm}\label{th1}(\cite{ek})
   Let V be a complete metric space and $F : V \longrightarrow \R\cup \left\lbrace +\infty\right\rbrace$ be a lower semicontinuous functional on $V$, that is bounded below and not identically equal to $+\infty$. Fix $\varepsilon>0$ and a  point $u\in V$ 
     such that
    $$F(u)\leqslant \varepsilon +\inf\limits_{x\in V}F(x).$$ Then for every $\gamma > 0$,
     there exists some point $v\in V$ such that :
     $$F(v)\leqslant F(u),$$
     $$d(u,v)\leqslant \gamma$$
     and for all $w\neq v$
     $$F(w)> F(v)-\dfrac{\varepsilon}{\gamma}d(v,w).$$
   \end{thm}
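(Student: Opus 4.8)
This final statement is Ekeland's variational principle, so the plan is to reproduce the classical order-theoretic argument. First I would reduce the three conclusions to a single minimality statement by rescaling: set $\lambda=\varepsilon/\gamma$ and introduce on $V$ the relation $w\preceq z \iff F(w)+\lambda d(w,z)\leqslant F(z)$. I would then check that $\preceq$ is a partial order. Reflexivity is immediate; antisymmetry follows by adding the two defining inequalities of $w\preceq z$ and $z\preceq w$ and using $d(w,z)=d(z,w)$, which forces $2\lambda d(w,z)\leqslant 0$ and hence $w=z$ since $\lambda>0$; transitivity follows by adding the defining inequalities and using the triangle inequality for $d$. The point of this relation is that a $\preceq$-minimal point $v$ (one for which no $w\neq v$ satisfies $w\preceq v$) is exactly a point with $F(w)+\lambda d(w,v)>F(v)$ for every $w\neq v$, i.e. $F(w)>F(v)-\frac{\varepsilon}{\gamma}d(v,w)$, which is the last assertion.

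Next I would construct such a minimal $v$ dominated by $u$ by means of nested lower sets. For $w\in V$ put $S(w)=\{z\in V:z\preceq w\}$; since $F$ is lower semicontinuous and $d$ is continuous, each $S(w)$ is closed, and it is nonempty because $w\in S(w)$. Starting from $u_0=u$, I would inductively select $u_{n+1}\in S(u_n)$ with $F(u_{n+1})\leqslant \inf_{S(u_n)}F+2^{-n}$, which is possible since $F$ is bounded below. Transitivity of $\preceq$ yields the nesting $S(u_{n+1})\subseteq S(u_n)$.

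The crucial step, and the part I expect to require the most care, is to show that $\operatorname{diam}S(u_n)\to 0$. For any $z\in S(u_{n+1})$ one has $z\preceq u_{n+1}$, so $F(z)+\lambda d(z,u_{n+1})\leqslant F(u_{n+1})$; combining this with $F(z)\geqslant \inf_{S(u_{n+1})}F\geqslant \inf_{S(u_n)}F$ (the last inequality from $S(u_{n+1})\subseteq S(u_n)$) and with the choice $F(u_{n+1})\leqslant \inf_{S(u_n)}F+2^{-n}$ gives $\lambda d(z,u_{n+1})\leqslant 2^{-n}$, whence $\operatorname{diam}S(u_{n+1})\leqslant 2^{1-n}/\lambda\to 0$. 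By completeness of $V$ and Cantor's intersection theorem, the nested family of nonempty closed sets $S(u_n)$ has intersection reducing to a single point $v$.

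Finally I would read off the three conclusions. Since $v\in S(u_0)=S(u)$ we have $v\preceq u$, that is $F(v)+\lambda d(v,u)\leqslant F(u)$; this immediately gives $F(v)\leqslant F(u)$, and using $F(v)\geqslant \inf_V F$ together with the hypothesis $F(u)\leqslant \inf_V F+\varepsilon$ gives $\lambda d(u,v)\leqslant F(u)-F(v)\leqslant \varepsilon$, i.e. $d(u,v)\leqslant \gamma$. For minimality, if $w\preceq v$ then $w\preceq u_n$ for every $n$ (because $v\preceq u_n$ and $\preceq$ is transitive), so $w\in\bigcap_n S(u_n)=\{v\}$ and thus $w=v$; equivalently, no $w\neq v$ satisfies $w\preceq v$, which is precisely the strict inequality $F(w)>F(v)-\frac{\varepsilon}{\gamma}d(v,w)$. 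This completes the plan.
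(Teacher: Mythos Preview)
The paper does not prove this theorem; it simply quotes Ekeland's variational principle with a citation to \cite{ek} and uses it as a tool in Section~\ref{S4}. Your proposal reproduces the classical order-theoretic proof correctly: the partial order $w\preceq z\Leftrightarrow F(w)+\lambda d(w,z)\leqslant F(z)$, the nested closed lower sets $S(u_n)$, the diameter estimate via the choice $F(u_{n+1})\leqslant \inf_{S(u_n)}F+2^{-n}$, and Cantor's intersection theorem are all handled properly, and the three conclusions are read off exactly as you indicate. There is nothing to compare against in the paper itself, and nothing to correct in your argument.
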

   
    \begin{thm}\label{th2} (\cite{110})
        Suppose that $Y$ is a reflexive Banach space with norm $||.||$ and let
        $V\subset Y$ be a weakly closed subset of $Y$. Suppose $E : V \longrightarrow \R \cup \left\lbrace +\infty\right\rbrace $ is coercive
        and (sequentially) weakly lower semi-continuous on $V$ with respect to $Y$, that
        is, suppose the following conditions are fulfilled:
     \begin{itemize}
        \item[$\bullet$] $E(u)\rightarrow \infty$ as $||u||\rightarrow \infty$, $u\in V$.
        
        \item[$\bullet$]  For any $u\in V$, any sequence $\left\lbrace u_n\right\rbrace $ in $V$ such that $u_n\rightharpoonup u$ weakly in $X$
        there holds:
       $$E(u)\leqslant \liminf_{n\rightarrow \infty}E(u_n).$$
         \end{itemize} 
        Then $E$ is bounded from below on $V$ and attains its infimum  in $V$.     
         \end{thm}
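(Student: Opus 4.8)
The plan is to argue by the classical direct method of the calculus of variations, the point being that reflexivity supplies exactly the weak compactness that a general Banach space lacks. First I would set $m=\inf_{u\in V}E(u)\in[-\infty,+\infty]$. If $E\equiv+\infty$ the statement is vacuous, so I may assume $E$ is finite somewhere and hence $m<+\infty$. I then pick a minimizing sequence $(u_n)\subset V$ with $E(u_n)\to m$; discarding finitely many terms, I may assume $E(u_n)\leqslant m+1<+\infty$ for every $n$.

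The first genuine step is to show that $(u_n)$ is bounded in $Y$, and this is precisely where coercivity is used. If $(u_n)$ were unbounded, some subsequence would satisfy $\|u_{n_k}\|\to\infty$, whence $E(u_{n_k})\to+\infty$ by the coercivity hypothesis, contradicting $E(u_n)\leqslant m+1$. Therefore $\sup_n\|u_n\|<\infty$. The second step extracts a weak limit: since $Y$ is reflexive and $(u_n)$ is bounded, the Banach--Alaoglu theorem together with reflexivity (equivalently, the Eberlein--\v{S}mulian theorem) yields a subsequence $u_{n_k}\rightharpoonup u_0$ converging weakly in $Y$. Because $V$ is weakly closed and each $u_{n_k}\in V$, the weak limit $u_0$ lies in $V$, so $E(u_0)$ is well defined.

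The final step passes to the limit using sequential weak lower semicontinuity. We obtain
\[
E(u_0)\leqslant\liminf_{k\to\infty}E(u_{n_k})=\lim_{n\to\infty}E(u_n)=m .
\]
On the other hand $u_0\in V$ forces $E(u_0)\geqslant m$ by the definition of the infimum, so $E(u_0)=m$. In particular $m=E(u_0)\leqslant m+1<+\infty$ shows $m$ is finite; consequently $E(u)\geqslant m>-\infty$ for all $u\in V$, i.e. $E$ is bounded below, and the infimum is attained at $u_0$.

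I expect no serious obstacle here: the only real analytic input is the weak sequential compactness of bounded sets in a reflexive space, and everything else is bookkeeping. The two points one must be careful about are ensuring the minimizing sequence has finite energy so that coercivity genuinely applies, and invoking the weak closedness of $V$ so that the weak limit remains an admissible competitor.
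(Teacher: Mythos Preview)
Your argument is correct and is the standard direct-method proof of this classical result. Note, however, that the paper does not supply its own proof of this theorem: it is quoted from Struwe's book \cite{110} as a tool and left unproved, so there is nothing in the paper to compare against beyond the citation.
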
        
\section{Some qualitative properties of  $\mathcal{N}^s_{a(x,.)}$}\label{S3}
The aim of this section is to give the basic properties of the fractional $a_{a(x,)}$-Laplacian with the associated $a_{a(x,)}$-Neumann boundary
condition.
          
Let $u : \R^N \longrightarrow \R$ be a measurable function, we set
$$\|u\|_X=[u]_{s,\varPhi_{x,y},\R^{2N}\setminus (C\Omega)^2}+\|u\|_{\widehat{\varPhi}_x}+\|u\|_{\widehat{\varPhi}_x,\beta,C\Omega}$$ 
    where
{\small$$[u]_{s,\varPhi_{x,y},\R^{2N}\setminus (C\Omega)^2}=\inf \Bigg\{\lambda >0 :  \int_{\R^{2N}\setminus (C\Omega)^2} \varPhi_{x,y}\left( \dfrac{|u(x)- u(y)|}{\lambda|x-y|^s}\right) \dfrac{dxdy}{|x-y|^N}\leqslant 1 \Bigg\}
$$ }   
and
$$\|u\|_{\widehat{\varPhi}_x,\beta,C\Omega}=\inf\left\lbrace \lambda>0 \text{ : }\int_{C\Omega}\beta(x)\widehat{\varPhi}_x\left( \dfrac{|u(x)|}{\lambda}\right) dx\leqslant 1\right\rbrace $$
    with $C\Omega =\R^N\setminus \Omega$. We define 
    $$X=\left\lbrace u : \R^N\longrightarrow \R~~\text{ measurable } : \|u\|_X<\infty\right\rbrace.$$ 
    \begin{rem}
 It is easy to see that $\|.\|_X$ is a norm on $X$. We only show that if $\|u\|_X=0$, then $u=0$ a.e. in $\R^N$. Indeed, form $\|u\|_X=0$, we get $\|u\|_{\widehat{\varPhi}_x}=0$, which implies that 
 \begin{equation}\label{N1}
 u=0 ~~\text{a.e. in } \Omega
 \end{equation} 
 and 
 \begin{equation}\label{N2}
 \int_{\R^{2N}\setminus (C\Omega)^2} \varPhi_{x,y}\left( \dfrac{|u(x)- u(y)|}{|x-y|^s}\right) \dfrac{dxdy}{|x-y|^N}=0.
 \end{equation} 
 By $(\ref{N2})$, we deduce that $u(x)=u(y)$ in $\R^{2N}\setminus (C\Omega)^2$, that is $u=c\in \R$ in $\R^N$, and by $(\ref{N1})$ we have $u=0$ a.e. in $\R^N$.
    
      \end{rem} 
    \begin{prop}\label{rem}
     Note that the norm $\|.\|_X$ is equivalent on $X$ to
     $$\|u\|:=\inf\left\lbrace \lambda>0~~:~~\rho_s\left( \dfrac{u}{\lambda}\right) \leqslant 1\right\rbrace $$
     where, the modular function $\rho_s~~ : X\longrightarrow \R$ is defined by
     $$
     \begin{aligned}
     \rho_s(u)=&\int_{\R^{2N}\setminus (C\Omega)^2} \varPhi_{x,y}\left( \dfrac{|u(x)- u(y)|}{|x-y|^s}\right) \dfrac{dxdy}{|x-y|^N}\\
     &+\int_{\Omega}\widehat{\varPhi}_x\left(|u(x)|\right) dx+\int_{C\Omega}\beta(x)\widehat{\varPhi}_x\left( |u(x)|\right) dx.
      \end{aligned}
      $$     
     \end{prop}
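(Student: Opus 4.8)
The plan is to recognise that the three pieces of $\|\cdot\|_X$ are the Luxemburg (semi)norms attached to three separate modulars whose sum is exactly $\rho_s$, and then to run the standard two-sided comparison between ``sum of Luxemburg norms'' and ``Luxemburg norm of the sum.'' Setting
\begin{equation*}
\rho_1(u)=\int_{\R^{2N}\setminus (C\Omega)^2}\varPhi_{x,y}\left(\frac{|u(x)-u(y)|}{|x-y|^s}\right)\frac{dxdy}{|x-y|^N},\qquad \rho_2(u)=\int_{\Omega}\widehat{\varPhi}_x(|u(x)|)\,dx,\qquad \rho_3(u)=\int_{C\Omega}\beta(x)\widehat{\varPhi}_x(|u(x)|)\,dx,
\end{equation*}
we have $\rho_s=\rho_1+\rho_2+\rho_3$, and the three terms $[u]_{s,\varPhi_{x,y},\R^{2N}\setminus(C\Omega)^2}$, $\|u\|_{\widehat{\varPhi}_x}$, $\|u\|_{\widehat{\varPhi}_x,\beta,C\Omega}$ are, in order, the Luxemburg functionals $\|u\|_1,\|u\|_2,\|u\|_3$ generated by $\rho_1,\rho_2,\rho_3$. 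I would prove the equivalence in the sharp form $\|u\|\le\|u\|_X\le 3\|u\|$. A point used throughout is that each $\varPhi_{x,y}$ and $\widehat{\varPhi}_x$ is \emph{convex}: indeed $\varphi_{x,y}$ and $\widehat{\varphi}_x$ are increasing homeomorphisms, so their primitives are convex, whence each $\rho_i$ is a convex modular with $\rho_i(0)=0$ and therefore $\rho_i(tv)\le t\,\rho_i(v)$ for $t\in[0,1]$.

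First I would establish the upper bound $\|u\|\le\|u\|_X$. Assuming $u\neq 0$ (the case $u=0$ being trivial), put $\lambda=\|u\|_X>0$, which is positive precisely because $\|\cdot\|_X$ is a norm. For each $i$ we have $\lambda\ge\|u\|_i$, so writing $u/\lambda=(\|u\|_i/\lambda)\,(u/\|u\|_i)$ and using convexity gives $\rho_i(u/\lambda)\le(\|u\|_i/\lambda)\,\rho_i(u/\|u\|_i)\le\|u\|_i/\lambda$, where $\rho_i(u/\|u\|_i)\le 1$ holds because the Luxemburg infimum defining $\|u\|_i$ is attained (a consequence of the $\Delta_2$-conditions (\ref{r1}) and (\ref{rr1})). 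Summing over $i=1,2,3$ yields $\rho_s(u/\lambda)\le(\|u\|_1+\|u\|_2+\|u\|_3)/\lambda=1$, and therefore $\|u\|\le\lambda=\|u\|_X$.

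Conversely I would show $\|u\|_X\le 3\|u\|$. Again by the $\Delta_2$-conditions the infimum in the definition of $\|u\|$ is attained, so $\rho_s(u/\|u\|)\le 1$. Since $\rho_1,\rho_2,\rho_3\ge 0$ and each $\rho_i\le\rho_s$ pointwise, we get $\rho_i(u/\|u\|)\le\rho_s(u/\|u\|)\le 1$, which by definition of the individual Luxemburg functional forces $\|u\|_i\le\|u\|$ for $i=1,2,3$. Adding the three inequalities gives $\|u\|_X=\|u\|_1+\|u\|_2+\|u\|_3\le 3\|u\|$, and combined with the previous step this proves the stated equivalence.

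The step I expect to be the main obstacle is the attainment $\rho_s(u/\|u\|)\le 1$ (and the analogous $\rho_i(u/\|u\|_i)\le 1$): this is exactly where the hypotheses (\ref{v1}), (\ref{r1})--(\ref{rr1}) are indispensable, since without the $\Delta_2$-property the Luxemburg infimum need not be realised and the lower estimate could fail. A second, minor technicality is the degenerate case where some component (semi)norm vanishes, say $\|u\|_i=0$; there I would observe that $\|u\|_i=0$ together with convexity forces $\rho_i(u/\mu)=0$ for every $\mu>0$, so that component contributes nothing and the computations above remain valid. Everything else is a routine manipulation of convex modulars, with no hypothesis needed beyond those already assumed.
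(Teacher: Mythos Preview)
Your argument is correct and is precisely the standard two-sided comparison between a sum of Luxemburg norms and the Luxemburg norm of the summed modular; the paper itself does not supply a proof here but simply refers the reader to \cite[Proposition 2.1]{benkirane}, so there is nothing substantive to compare against. One minor remark: the attainment $\rho_i(u/\|u\|_i)\le 1$ (and likewise $\rho_s(u/\|u\|)\le 1$) does not actually require the $\Delta_2$-conditions you invoke---it follows already from monotone convergence, since for any sequence $\lambda_n\downarrow\|u\|_i$ with $\rho_i(u/\lambda_n)\le 1$ the nonnegative integrands increase pointwise to the integrand at $\lambda=\|u\|_i$. This does not affect the validity of your proof, only the sharpness of the hypothesis you cite.
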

      Proof is similar to \cite[Proposition 2.1]{benkirane}.
    \begin{prop}\label{Nmod}
             Assume that (\ref{v1}) is satisfied. Then, for any $u \in X$, the following relations hold true:
               \begin{equation}\label{Nmod1}
         ||u||>1\Longrightarrow      ||u||^{\varphi^-} \leqslant  \rho_s(u)\leqslant  ||u||^{\varphi^+},
               \end{equation}
               \begin{equation}\label{Nmod2}
                    ||u||<1\Longrightarrow    ||u||^{\varphi^+} \leqslant  \rho_s(u)\leqslant  ||u||^{\varphi^-}. \end{equation}
               \end{prop}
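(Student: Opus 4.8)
The plan is to establish the relations in Proposition \ref{Nmod} by exploiting the homogeneity-type structure encoded in the growth condition (\ref{v1}), exactly as in the analogous modular--norm comparison of Proposition \ref{mod}. The key analytic fact I would extract from (\ref{v1}) is that, for every $(x,y)$ and every $\lambda>1$,
\begin{equation}\label{scaling}
\lambda^{\varphi^-}\varPhi_{x,y}(t)\leqslant \varPhi_{x,y}(\lambda t)\leqslant \lambda^{\varphi^+}\varPhi_{x,y}(t),
\end{equation}
with the two inequalities reversed when $0<\lambda<1$, and similarly for $\widehat{\varPhi}_x$ via (\ref{A2}). These scaling bounds follow by integrating the differential inequality $\varphi^-\varPhi_{x,y}(t)\leqslant t\varphi_{x,y}(t)\leqslant\varphi^+\varPhi_{x,y}(t)$, i.e. by comparing $\frac{d}{dt}\log\varPhi_{x,y}(\lambda t)$ against $\varphi^\pm/\lambda$; this is the one computational lemma I would record first.

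Next I would observe that the modular $\rho_s$ is an unconditional sum of three nonnegative integral terms, each of which scales by (\ref{scaling}) when its argument is multiplied by a constant $\lambda$. Concretely, for $\lambda>1$ one has $\rho_s(\lambda u)\leqslant \lambda^{\varphi^+}\rho_s(u)$ and $\rho_s(\lambda u)\geqslant \lambda^{\varphi^-}\rho_s(u)$, because each term carries $\varPhi_{x,y}$ or $\widehat{\varPhi}_x$ and the weights $|x-y|^{-N}$ and $\beta(x)$ are merely nonnegative multipliers that pass through the pointwise bound (\ref{scaling}). This linearity-over-the-three-pieces is what lets me treat the compound norm $\|\cdot\|$ just as if it were a single Luxemburg norm.

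Then I would run the standard Luxemburg-norm argument. Suppose first $\|u\|>1$. By the definition of $\|u\|$ as an infimum and continuity of $\rho_s$ along the ray $t\mapsto\rho_s(u/t)$, one has $\rho_s(u/\|u\|)=1$. Writing $u=\|u\|\cdot(u/\|u\|)$ and applying the scaling bounds (\ref{scaling}) with $\lambda=\|u\|>1$ to $\rho_s(u)=\rho_s\bigl(\|u\|\cdot(u/\|u\|)\bigr)$ yields
\begin{equation}\label{upperlower}
\|u\|^{\varphi^-}\leqslant \rho_s(u)\leqslant \|u\|^{\varphi^+},
\end{equation}
which is exactly (\ref{Nmod1}). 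The case $\|u\|<1$ is symmetric: applying (\ref{scaling}) with $0<\lambda=\|u\|<1$ reverses the two exponents, producing (\ref{Nmod2}). The equality $\rho_s(u/\|u\|)=1$ in the boundary normalization rests on $\varPhi_{x,y},\widehat{\varPhi}_x\in\Delta_2$ (i.e. (\ref{r1}) and (\ref{rr1})), which guarantees the modular is finite and continuous so that the infimum defining $\|u\|$ is attained.

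The main obstacle I anticipate is not the scaling inequality itself but the normalization step $\rho_s(u/\|u\|)=1$: one must verify that $\lambda\mapsto\rho_s(u/\lambda)$ is finite, continuous, and strictly decreasing on the relevant range, so that the value $1$ is actually achieved at $\lambda=\|u\|$ rather than merely approached. This requires invoking the $\Delta_2$-condition to rule out degenerate behavior of the three integrals, and handling the three pieces simultaneously (two Musielak integrals over $\Omega$ and $C\Omega$ plus the Gagliardo double integral) rather than one. Since Proposition \ref{mod} establishes precisely this behavior for the two-term modular $\varPsi$, I would reduce the work by mimicking that proof term-by-term, so the entire argument is essentially the one in \cite{benkirane} adapted to the additional $\beta$-weighted boundary term, whose nonnegativity makes it harmless.
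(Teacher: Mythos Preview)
Your proposal is correct and is precisely the standard modular--norm comparison argument that the paper invokes by writing ``Proof is similar to \cite[Proposition 2.2]{benkirane}''; the paper gives no further details. Your extraction of the scaling bounds \eqref{scaling} from \eqref{v1}, the termwise application to the three pieces of $\rho_s$, and the normalization $\rho_s(u/\|u\|)=1$ via the $\Delta_2$-condition constitute exactly the intended adaptation.
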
  
 Proof is similar to \cite[Proposition 2.2]{benkirane}.
\begin{prop}
$\left(X, \|.\|_X\right)$ is a reflexive Banach space.
\end{prop}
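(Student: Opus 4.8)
The plan is to realize $(X,\|\cdot\|_X)$ as a closed subspace of a finite product of reflexive Musielak–Orlicz spaces, so that both completeness and reflexivity follow at once. Writing $d\mu(x,y)=\dfrac{dxdy}{|x-y|^N}$, I introduce the three factor spaces
\[
Y_1=L_{\varPhi_{x,y}}\bigl(\R^{2N}\setminus(C\Omega)^2,\,d\mu\bigr),\quad Y_2=L_{\widehat{\varPhi}_x}(\Omega),\quad Y_3=L_{\widehat{\varPhi}_x}\bigl(C\Omega,\,\beta\,dx\bigr),
\]
each endowed with its Luxemburg norm, and form the product $Y=Y_1\times Y_2\times Y_3$ with $\|(v,w_1,w_2)\|_Y=\|v\|_{Y_1}+\|w_1\|_{Y_2}+\|w_2\|_{Y_3}$. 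I then define $T:X\to Y$ by
\[
T(u)=\Bigl(\tfrac{u(x)-u(y)}{|x-y|^s},\ u|_{\Omega},\ u|_{C\Omega}\Bigr).
\]
By the very definition of $\|\cdot\|_X$, the map $T$ is a linear isometry onto its image, so it suffices to prove that $Y$ is reflexive and that $T(X)$ is closed in $Y$.

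For the reflexivity of the factors I would invoke the standard theory of Musielak–Orlicz spaces: a space $L_{\Psi}$ is reflexive as soon as both $\Psi$ and its conjugate satisfy the $\Delta_2$-condition. Condition (\ref{v1}) delivers exactly this for $\varPhi_{x,y}$: the upper bound $\frac{t\varphi_{x,y}(t)}{\varPhi_{x,y}(t)}\le\varphi^+$ yields $\varPhi_{x,y}\in\Delta_2$ (this is precisely (\ref{r1})), while the lower bound with $\varphi^->1$ forces $\overline{\varPhi}_{x,y}\in\Delta_2$. The same reasoning applied to (\ref{A2}) and (\ref{rr1}) gives $\widehat{\varPhi}_x,\overline{\widehat{\varPhi}}_x\in\Delta_2$. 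Hence $Y_1,Y_2,Y_3$ are reflexive; this is exactly the mechanism behind the reflexivity statement for $W^sL_{\varPhi_{x,y}}(\Omega)$ recalled above from \cite{benkirane}, the weight $\beta\in L^\infty(C\Omega)$ causing no extra difficulty in $Y_3$. Since a finite product of reflexive spaces is reflexive, $Y$ is reflexive.

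It then remains to verify that $T(X)$ is closed, which is where the only real work lies. I would take a sequence with $T(u_n)\to(v,w_1,w_2)$ in $Y$; using that the $\Delta_2$-condition makes norm convergence equivalent to modular convergence (cf. Proposition \ref{Nmod}), I extract a subsequence converging $\mu$- and Lebesgue-a.e., set $u=w_1$ on $\Omega$ and $u=w_2$ on $C\Omega$, and identify the a.e. limit of $\frac{u_n(x)-u_n(y)}{|x-y|^s}$ with $\frac{u(x)-u(y)}{|x-y|^s}$; an application of Fatou's lemma to the modular then gives $v=\frac{u(x)-u(y)}{|x-y|^s}$ and $u\in X$, i.e. $(v,w_1,w_2)=T(u)$. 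The subtle point, and the main obstacle, is the a.e. reconstruction of a single function $u$ on all of $\R^N$ from the separate limits on $\Omega$ and $C\Omega$ when $\beta$ may vanish on part of $C\Omega$: there the Gagliardo component $v$ must be used to control the differences $u(x)-u(y)$ and pin $u$ down up to null sets. Once $T(X)$ is shown closed it is a closed subspace of the reflexive space $Y$, hence itself reflexive and complete, and therefore so is $(X,\|\cdot\|_X)$ through the isometry $T$.

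As an alternative route one could instead establish directly that $X$ is uniformly convex, exploiting (\ref{f2.}), i.e. the convexity of $t\mapsto\varPhi_{x,y}(\sqrt t)$, via a Clarkson-type inequality as in the uniform convexity argument for $W^sL_{\varPhi_{x,y}}(\Omega)$ in \cite{benkirane}, and then conclude reflexivity from the Milman–Pettis theorem; the embedding argument above is preferable, however, since it directly reuses the properties already recorded for the factor spaces.
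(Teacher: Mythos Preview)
Your proposal is correct and follows essentially the same strategy as the paper: the paper too embeds $X$ isometrically into the product $Y=L_{\widehat{\varPhi}_x}(\Omega)\times L_{\widehat{\varPhi}_x}(C\Omega,\beta\,dx)\times L_{\varPhi_{x,y}}(\R^{2N}\setminus(C\Omega)^2,d\mu)$ via $T(u)=(u|_\Omega,\,u|_{C\Omega},\,D^su)$ to obtain reflexivity, and the key technical step you single out --- reconstructing $u$ on $C\Omega$ from the Gagliardo component when $\beta$ may vanish --- is carried out exactly as you suggest (the paper fixes, via a Fubini argument, a good point $x_0\in\Omega$ at which both $u_n(x_0)$ and $E_{u_n}(x_0,\cdot)$ converge a.e., and recovers $u(y)=u(x_0)-|x_0-y|^s E_u(x_0,y)$ for a.e.\ $y\in C\Omega$). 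The only difference is organizational: the paper first proves completeness of $X$ directly by this reconstruction and Fatou, and only afterwards invokes the isometry $T$ for reflexivity, whereas you derive both completeness and reflexivity simultaneously from closedness of $T(X)$; the underlying content is identical.
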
 
\begin{proof} 
Now, we prove that $X$ is complete. For this, let $\left\lbrace u_n\right\rbrace $ be a Cauchy sequence in $X$. In particular  $\left\lbrace u_n\right\rbrace $ is a Cauchy sequence in $L_{\widehat{\varPhi}_x(\Omega)}$ and so, there exists $u\in L_{\widehat{\varPhi}_x(\Omega)}$ such that 
$$u_n\longrightarrow u~~\text{in}~~L_{\widehat{\varPhi}_x(\Omega)}~~\text{and a.e. in } \Omega.$$
Then, we can find $Z_1\subset \R^N$ such that 
\begin{equation}\label{N3}
|Z_1|=0~~\text{ and } u_n(x)\longrightarrow u(x)~~\text{ for every } x\in \Omega\setminus Z_1.
\end{equation} 
For any $u : \R^N\longrightarrow \R$, and for any $(x,y)\in \R^{2N}$, we set
$$E_u(x,y)=\dfrac{(u(x)-u(y))}{|x-y|^s}\mathcal{X}_{\R^{2N}\setminus (C\Omega)^2}(x,y).$$
Using the fact that $\left\lbrace u_n\right\rbrace $ is a Cauchy sequence in $L_{\varPhi_{x,y}}\left( \R^{2N},d\mu\right)$, where $\mu$   is a  measure on  $\Omega\times\Omega$ which is given by
           $d\mu :=|x-y|^{-N}dxdy.$ So, there exists a subsequence $\left\lbrace E_{u_n}\right\rbrace$ converges to $E_u$ in $L_{\varPhi_{x,y}}\left( \R^{2N},d\mu\right)$ and a.e. in $\R^{2N}$. Then, we can find $Z_2\subset \R^{2N}$ such that
\begin{equation}\label{N4}
|Z_2|=0~~\text{ and } E_{u_n}(x,y)\longrightarrow E_u(x,y)~~\text{ for every } (x,y)\in \R^{2N}\setminus Z_2.
\end{equation} 
For any $x\in \Omega$, we set
$$S_x:=\left\lbrace y\in \R^N~~:~~(x,y)\in \R^{2N}\setminus Z_2\right\rbrace $$
$$W:=\left\lbrace (x,y)\in \R^{2N},~~x\in \Omega~~\text{and}~~y\in \R^N\setminus S_x\right\rbrace $$
$$V:=\left\lbrace x\in \Omega~~:~~|\R^N\setminus S_x|=0\right\rbrace.$$
Let $(x,y)\in W$, we have $y\in \R^N\setminus S_x$. Then $(x,y)\notin \R^{2N}\setminus Z_2$, i.e. $(x,y)\in Z_2$. So
$$W\subset Z_2,$$
therefore, by $(\ref{N4})$
$$|W|=0,$$
then, by the Fubini's Theorem we have
$$0=|W|=\int_{\Omega}| \R^N\setminus S_x|dx,$$
which implies that $| \R^N\setminus S_x|=0$ a.e $x\in \Omega$. It follows that $|\Omega\setminus V|=0$. This end with $(\ref{N3})$, implies that
$$|\Omega\setminus (V\setminus Z_1)|=|(\Omega\setminus V)\cup Z_1|\leqslant |\Omega\setminus V|+|Z_1|=0.$$
In particular $V\setminus Z_1\neq \varnothing,$ then we can fix $x_0\in V\setminus Z_1$, and by $(\ref{N3})$, it follows
$$\lim\limits_{n\rightarrow \infty} u_n(x_0)=u(x_0).$$
In addition, since $x_0\in V,$ we obtain $|\R^N\setminus S_{x_0}|=0$. Then, for almost all $y\in \R,$ this yields $(x_0,y)\in \R^{2N}\setminus Z_2$, and hence, by
$(\ref{N4})$ 
$$\lim\limits_{n\rightarrow \infty} E_{u_n}(x_0,y)=E_u(x_0,y).$$
Since $\Omega\times C\Omega \subset \R^{2N}\setminus (C\Omega)^2$, we have 
$$E_{u_n}(x_0,y)=\dfrac{(u_n(x_0)-u_n(y))}{|x_0-y|^s}\mathcal{X}_{\R^{2N}\setminus (C\Omega)^2}(x_0,y)$$
for almost all $y\in C\Omega.$ However, this implies
$$\lim\limits_{n\rightarrow \infty} u_n(y)=\lim\limits_{n\rightarrow \infty}\left(  u_n(x_0) -|x_0-y|^sE_{u_n}(x_0,y)\right)= u(x_0) -|x_0-y|^sE_{u}(x_0,y)$$
for almost all $y\in C\Omega.$
Combining this end with $(\ref{N3})$, we see that $u_n$ is converges to some $u$ a.e. in $\R^N$. Since $u_n$ is a Cauchy sequence in $X$, so for any $\varepsilon>0$, there exists $N_\varepsilon>0$ such that for any $k>N_\varepsilon$, we have by applying Fatou's Lemma
$$
\begin{aligned}
\varepsilon\geqslant & \liminf\limits_{k\rightarrow \infty}\|u_n-u_k\|_X\\
&\geqslant c\liminf\limits_{k\rightarrow \infty}\|u_n-u_k\|\\
&\geqslant c \liminf\limits_{k\rightarrow \infty}\left( \rho_{s}(u_n-u_k)\right) ^{\frac{1}{\varphi^\pm}}\\
&\geqslant c \left( \rho_{s}(u_n-u)\right) ^{\frac{1}{\varphi^\pm}}\\
&\geqslant c \|u_n-u\|^{\frac{\varphi^\pm}{\varphi^\pm}}\\
&\geqslant c \|u_n-u\|_X^{\frac{\varphi^\pm}{\varphi^\pm}},
\end{aligned}
$$
where $c$ is a positive constant given by Proposition $\ref{rem}$. This implies that $u_n$ converge to $u$ in $X$, and so $X$ is complete space.
Now, we show that $X$ is a reflexive space. For this, we consider the following space
$$Y=L_{\widehat{\varPhi}_x}(\Omega)\times L_{\widehat{\varPhi}_x}(C\Omega)\times L_{\widehat{\varPhi}_{x,y}}\left( \R^{2N}\setminus (C\Omega)^2,d\mu\right) $$
endowed with the norm
$$\|u\|_Y=[u]_{s,\varPhi_{x,y},\R^{2N}\setminus (C\Omega)^2}+\|u\|_{\widehat{\varPhi}_x}+\|u\|_{\widehat{\varPhi}_x,\beta,C\Omega}.$$
We note that $(Y, \|.\|_Y)$ is a reflexive Banach space, we consider the map 
$T : X\longrightarrow Y$ defined as :
$$T(u)=\left(u,u,D^su\right).$$
By construction, we have that
$$\|T(u)\|_Y=\|u\|_X.$$
Hence, $T$ is an isometric from $X$ to the reflexive space $Y$. This show that $X$ is reflexive.
\end{proof}
\begin{prop}\label{N10}

      Let $\Omega$  be a bounded open
       subset of  $\R^N$ with $C^{0,1}$-regularity 
         and bounded boundary. If $(\ref{15})$ and  $(\ref{16n})$  hold, then 
      \begin{equation}\label{N18}
       X\hookrightarrow L_ {\widehat{\varPhi}^*_{x,s}}(\Omega).
      \end{equation}
              In particular, the embedding
                \begin{equation}\label{N27}
                 X\hookrightarrow L_{B_x}(\Omega),
                \end{equation}
                is compact for all $B_x\prec\prec \widehat{\varPhi}^*_{x,s}$.
         
\end{prop}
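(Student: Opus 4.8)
The plan is to reduce the statement to the embedding theorems already available for $W^sL_{\varPhi_{x,y}}(\Omega)$, namely Theorem \ref{3.4} and Theorem \ref{th2.}, by comparing the norm of $X$ with the norm of $W^sL_{\varPhi_{x,y}}(\Omega)$ after restriction to $\Omega$. The only analytic content needed is the monotonicity of the Gagliardo modular with respect to the domain of integration.

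First I would record the set-theoretic inclusion
$$\Omega\times\Omega\subset \R^{2N}\setminus (C\Omega)^2,$$
which is immediate from the decomposition $\R^{2N}\setminus (C\Omega)^2=(\Omega\times\Omega)\cup(\Omega\times C\Omega)\cup(C\Omega\times\Omega)$. Consequently, for every measurable $u$ and every $\lambda>0$,
$$\int_{\Omega}\int_{\Omega}\varPhi_{x,y}\left(\dfrac{|u(x)-u(y)|}{\lambda|x-y|^s}\right)\dfrac{dxdy}{|x-y|^N}\leqslant \int_{\R^{2N}\setminus (C\Omega)^2}\varPhi_{x,y}\left(\dfrac{|u(x)-u(y)|}{\lambda|x-y|^s}\right)\dfrac{dxdy}{|x-y|^N}.$$
Since each Gagliardo seminorm is the infimum of those $\lambda$ for which the corresponding modular is at most $1$, and since any $\lambda$ admissible on the larger domain is a fortiori admissible on the smaller one, this monotonicity yields $[u]_{s,\varPhi_{x,y}}\leqslant [u]_{s,\varPhi_{x,y},\R^{2N}\setminus (C\Omega)^2}$. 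Adding the $L_{\widehat{\varPhi}_x}$ contribution gives, for every $u\in X$,
$$\|u|_{\Omega}\|_{s,\varPhi_{x,y}}=\|u\|_{\widehat{\varPhi}_x}+[u]_{s,\varPhi_{x,y}}\leqslant \|u\|_{X}.$$

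In particular the restriction map $R:X\longrightarrow W^sL_{\varPhi_{x,y}}(\Omega)$, $R(u)=u|_{\Omega}$, is well defined, linear and continuous (with operator norm at most $1$). The continuous embedding $(\ref{N18})$ then follows by composing $R$ with the embedding $W^sL_{\varPhi_{x,y}}(\Omega)\hookrightarrow L_{\widehat{\varPhi}^*_{x,s}}(\Omega)$ of Theorem \ref{3.4}. For the second assertion, I would compose the bounded operator $R$ with the compact embedding $W^sL_{\varPhi_{x,y}}(\Omega)\hookrightarrow L_{B_x}(\Omega)$ of Theorem \ref{th2.}, valid for every $B_x\prec\prec\widehat{\varPhi}^*_{x,s}$: a bounded operator followed by a compact embedding is compact, since $R$ maps bounded subsets of $X$ to bounded subsets of $W^sL_{\varPhi_{x,y}}(\Omega)$, whose images under the compact embedding are relatively compact in $L_{B_x}(\Omega)$. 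This gives $(\ref{N27})$.

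The argument is essentially routine once the domain inclusion is noted; the only points requiring a little care are the direction of the inequality for the infimum-type seminorm (which goes the right way precisely because shrinking the domain decreases the modular) and the transfer of compactness through the composition. I do not expect any genuine obstacle here, because all the substantive embedding estimates are already contained in Theorems \ref{3.4} and \ref{th2.}; the role of $X$ is only to enlarge the Gagliardo integration region and to add the boundary term on $C\Omega$, neither of which can decrease the norm that controls the restriction to $\Omega$.
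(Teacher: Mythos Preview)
Your proposal is correct and follows exactly the same route as the paper: note that $\Omega\times\Omega\subset\R^{2N}\setminus(C\Omega)^2$, deduce $\|u\|_{s,\varPhi_{x,y}}\leqslant\|u\|_X$, and then invoke Theorems~\ref{3.4} and~\ref{th2.}. The paper's own proof is two lines and omits the infimum and composition details that you spell out, but the argument is identical.
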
  
 \begin{proof}
 Since $\Omega\times\Omega\subset \R^{2N}\setminus (C\Omega)^2$. Then
 $$||u||_{s,\varPhi_{x,y}}\leqslant \|u\|_X~~\text{for all }~~u\in X.$$
 Therefore, by Theorems \ref{3.4} and \ref{th2.}, we get our desired result.
 \end{proof}

         Now, by integration by part formula, we have the following result.
         \begin{prop}
         Let $u\in X$, then
         $$\int_\Omega  (-\Delta)^s_{a_{(x,.)}}u(x) dx=-\int_{\R^N\setminus \Omega}\mathcal{N}^s_{a(x,.)}u(x)dx.$$
         \end{prop}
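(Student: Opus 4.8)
The plan is to reduce both sides to double integrals of one and the same \emph{antisymmetric} kernel and to exploit the symmetry hypothesis $(\ref{n4})$ on $a$. For $x\neq y$ set
$$F(x,y)=a_{(x,y)}\!\left(\frac{|u(x)-u(y)|}{|x-y|^s}\right)\frac{u(x)-u(y)}{|x-y|^s}\,\frac{1}{|x-y|^{N+s}},$$
so that $(-\Delta)^s_{a_{(x,.)}}u(x)=2\,\mathrm{P.V.}\!\int_{\R^N}F(x,y)\,dy$, while $\mathcal{N}^s_{a(x,.)}u(x)=\int_\Omega F(x,y)\,dy$ for $x\in C\Omega$. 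Since $a$ is symmetric by $(\ref{n4})$, $|x-y|$ is symmetric, and $u(x)-u(y)$ changes sign under $(x,y)\mapsto(y,x)$, the kernel satisfies $F(y,x)=-F(x,y)$. This antisymmetry is the engine of the whole computation.

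First I would integrate the operator over $\Omega$, interchange the $dx$-integration with the principal-value limit, and apply Fubini to write $\int_\Omega(-\Delta)^s_{a_{(x,.)}}u\,dx$ as a principal-value double integral of $2F$ over $\Omega\times\R^N$. Splitting $\R^N=\Omega\cup C\Omega$ isolates a diagonal block $\Omega\times\Omega$ and a cross block $\Omega\times C\Omega$. On $\Omega\times\Omega$ the integral vanishes: the domain is invariant under $(x,y)\mapsto(y,x)$ whereas $F$ is antisymmetric, so the symmetric principal value equals its own negative. Only the cross-block contribution $\,2\iint_{\Omega\times C\Omega}F(x,y)\,dy\,dx\,$ survives.

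Next I would treat the right-hand side. By Fubini, $\int_{C\Omega}\mathcal{N}^s_{a(x,.)}u\,dx=\iint_{C\Omega\times\Omega}F(x,y)\,dy\,dx$, and relabelling $(x,y)\mapsto(y,x)$ together with $F(y,x)=-F(x,y)$ converts this into $-\iint_{\Omega\times C\Omega}F(x,y)\,dy\,dx$. Comparing the two cross-block integrals then identifies $\int_\Omega(-\Delta)^s_{a_{(x,.)}}u\,dx$ with $-\int_{C\Omega}\mathcal{N}^s_{a(x,.)}u\,dx$; note that this matching forces one to keep careful track of the multiplicative constant appearing in the definition of $(-\Delta)^s_{a_{(x,.)}}$ so that it agrees with the normalisation of $\mathcal{N}^s_{a(x,.)}$. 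It is in fact cleanest to view the claim as the case $v\equiv 1$ of the general Green formula (pairing the operator against a test function $v$ and symmetrising $v(x)F(x,y)$ on each block), which the excerpt announces will be needed anyway.

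The main obstacle is analytic rather than algebraic: one must justify that the principal-value limit commutes with $\int_\Omega(\cdot)\,dx$, that Fubini is applicable, and that the cross-block integral over $\Omega\times C\Omega$ is absolutely convergent. For $u\in X$ this should follow from finiteness of the modular $\rho_s(u)$, which bounds $\iint_{\R^{2N}\setminus(C\Omega)^2}\varPhi_{x,y}\!\big(|u(x)-u(y)|/|x-y|^s\big)\,|x-y|^{-N}\,dx\,dy$; together with the $\Delta_2$-conditions $(\ref{r1})$–$(\ref{rr1})$, the H\"older inequality, and the two-sided bound $(\ref{v1})$ relating $\varphi_{x,y}$ and $\varPhi_{x,y}$, one controls $F$ and its conjugate away from $\{x=y\}$ and secures integrability on the cross block. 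The delicate point is the diagonal: the cancellation on $\Omega\times\Omega$ holds only in the principal-value sense, so I would carry the $\varepsilon$-truncation $\R^N\setminus B_\varepsilon(x)$ through the $x$-integration, invoke antisymmetry on the truncated (still symmetric) domain to obtain \emph{exact} cancellation there, and only afterwards pass to the limit $\varepsilon\searrow 0$.
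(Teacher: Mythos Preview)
Your proposal is correct and follows essentially the same route as the paper: introduce the antisymmetric kernel $F(x,y)$, split $\int_\Omega(-\Delta)^s_{a_{(x,.)}}u\,dx$ into the diagonal block $\Omega\times\Omega$ (which vanishes by antisymmetry) and the cross block $\Omega\times C\Omega$, then relabel variables to identify the latter with $-\int_{C\Omega}\mathcal{N}^s_{a(x,.)}u\,dx$. The paper carries this out purely formally, without discussing the principal value, Fubini, or integrability; your extra analytic care and your remark about the normalising constant (the paper silently drops the factor $2$ from the definition of $(-\Delta)^s_{a_{(x,.)}}$ in the very first line of its proof) are well placed.
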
 
         \begin{proof}  
       Since the role of $x$ and $y$ are symmetric and $a_{x,y}$ is a symmetric function, we obtain
       $$
       \begin{aligned}
     \int_{\Omega} \int_{\Omega} & a_{(x,y)}\left( \dfrac{|u(x)-u(y)|}{|x-y|^s }\right)\dfrac{u(x)-u(y)}{|x-y|^s} \dfrac{dxdy}{|x-y|^{N+s}}\\
     &= - \int_{\Omega} \int_{\Omega} a_{(x,y)}\left( \dfrac{|u(x)-u(y)|}{|x-y|^s }\right)\dfrac{u(y)-u(x)}{|x-y|^s} \dfrac{dxdy}{|x-y|^{N+s}}\\
     &=- \int_{\Omega} \int_{\Omega} a_{(y,x)}\left( \dfrac{|u(y)-u(x)|}{|x-y|^s }\right)\dfrac{u(x)-u(y)}{|x-y|^s} \dfrac{dydx}{|x-y|^{N+s}}\\
     &=- \int_{\Omega} \int_{\Omega} a_{(x,y)}\left( \dfrac{|u(x)-u(y)|}{|x-y|^s }\right)\dfrac{u(x)-u(y)}{|x-y|^s} \dfrac{dxdy}{|x-y|^{N+s}}.\\
       \end{aligned}  
       $$
       This implies that 
       $$2\int_{\Omega} \int_{\Omega}  a_{(x,y)}\left( \dfrac{|u(x)-u(y)|}{|x-y|^s }\right)\dfrac{u(x)-u(y)}{|x-y|^s} \dfrac{dxdy}{|x-y|^{N+s}}=0$$
       that is,
      $$ \int_{\Omega} \int_{\Omega}  a_{(x,y)}\left( \dfrac{|u(x)-u(y)|}{|x-y|^s }\right)\dfrac{u(x)-u(y)}{|x-y|^s} \dfrac{dxdy}{|x-y|^{N+s}}=0.$$
   Hence, we have that
  $$
   \begin{aligned}
   \int_\Omega  (-\Delta)^s_{a_{(x,.)}}u(x) dx &=\int_{\Omega} \int_{\R^N}  a_{(x,y)}\left( \dfrac{|u(x)-u(y)|}{|x-y|^s }\right)\dfrac{u(x)-u(y)}{|x-y|^s} \dfrac{dydx}{|x-y|^{N+s}}\\
   &= \int_{\Omega} \int_{\R^N\setminus \Omega}  a_{(x,y)}\left( \dfrac{|u(x)-u(y)|}{|x-y|^s }\right)\dfrac{u(x)-u(y)}{|x-y|^s} \dfrac{dydx}{|x-y|^{N+s}}\\
   & ~~+\int_{\Omega} \int_{\Omega}  a_{(x,y)}\left( \dfrac{|u(x)-u(y)|}{|x-y|^s }\right)\dfrac{u(x)-u(y)}{|x-y|^s} \dfrac{dydx}{|x-y|^{N+s}}\\
   &= \int_{\R^N\setminus \Omega} \left(  \int_{\Omega}  a_{(x,y)}\left( \dfrac{|u(x)-u(y)|}{|x-y|^s }\right)\dfrac{u(x)-u(y)}{|x-y|^s} \dfrac{dx}{|x-y|^{N+s}}\right) dy\\
   &= -\int_{\R^N\setminus \Omega}\mathcal{N}^s_{a(x,.)}u(y)dy.
   \end{aligned} 
   $$
  \end{proof} 
    \begin{prop}
  For all $u\in X$, we have
  $$ 
   \begin{aligned}
   \dfrac{1}{2} & \int_{\R^{2N}\setminus(C\Omega)^2}  a_{(x,y)}\left( \dfrac{|u(x)-u(y)|}{|x-y|^s }\right)\dfrac{u(x)-u(y)}{|x-y|^s}\dfrac{v(x)-v(y)}{|x-y|^s} \dfrac{dxdy}{|x-y|^{N}}\\
   &=\int_\Omega v (-\Delta)^s_{a_{(x,.)}}u dx+\int_{C \Omega} v \mathcal{N}^s_{a(x,.)}udx.\\
   \end{aligned}
   $$                     
                    \end{prop}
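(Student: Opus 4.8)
The plan is to exploit the symmetry of the integrand on the left-hand side and then to split the domain $\R^{2N}\setminus(C\Omega)^2$ according to whether the first variable lies in $\Omega$ or in $C\Omega=\R^N\setminus\Omega$. I would first introduce the kernel
$$K_u(x,y)=a_{(x,y)}\left(\frac{|u(x)-u(y)|}{|x-y|^s}\right)\frac{u(x)-u(y)}{|x-y|^s}\frac{1}{|x-y|^{N+s}},$$
so that the left-hand side reads $\frac{1}{2}\int_{\R^{2N}\setminus(C\Omega)^2}K_u(x,y)(v(x)-v(y))\,dxdy$. Using the symmetry hypothesis (\ref{n4}) on $a$ together with the fact that $u(x)-u(y)$ is antisymmetric under the swap $x\leftrightarrow y$ while $|u(x)-u(y)|$ and $|x-y|$ are symmetric, one checks that $K_u(x,y)=-K_u(y,x)$; since $v(x)-v(y)$ is also antisymmetric, the full integrand $K_u(x,y)(v(x)-v(y))$ is symmetric. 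As the domain $\R^{2N}\setminus(C\Omega)^2$ is invariant under $x\leftrightarrow y$, the piece carrying $v(y)$ equals minus the piece carrying $v(x)$, whence
$$\frac{1}{2}\int_{\R^{2N}\setminus(C\Omega)^2}K_u(x,y)(v(x)-v(y))\,dxdy=\int_{\R^{2N}\setminus(C\Omega)^2}K_u(x,y)v(x)\,dxdy.$$

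Next I would decompose the domain in the variable $x$. If $x\in\Omega$, then $(x,y)\notin(C\Omega)^2$ for every $y\in\R^N$, whereas if $x\in C\Omega$ the constraint $(x,y)\notin(C\Omega)^2$ forces $y\in\Omega$. Applying Fubini's theorem then gives
$$\int_{\R^{2N}\setminus(C\Omega)^2}K_u v(x)\,dxdy=\int_\Omega v(x)\left(\int_{\R^N}K_u(x,y)\,dy\right)dx+\int_{C\Omega}v(x)\left(\int_\Omega K_u(x,y)\,dy\right)dx.$$
The inner integral over $\R^N$ is, by the very definition of the operator (read in the principal-value sense), the expression defining $(-\Delta)^s_{a_{(x,.)}}u(x)$, while the inner integral over $\Omega$ with $x\in C\Omega$ is exactly $\mathcal{N}^s_{a(x,.)}u(x)$. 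Substituting these two identifications, and keeping careful track of the multiplicative constant coming from the factor $2$ in the definition of $(-\Delta)^s_{a_{(x,.)}}$, yields the claimed Green-type formula.

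The main obstacle is not the algebra of the symmetrization but the analytic justification of the manipulations: the kernel $K_u$ is singular along the diagonal $x=y$ inside $\Omega\times\Omega$, so both the application of Fubini's theorem and the passage to $\int_{\R^N}K_u\,dy$ must be understood in the principal-value sense, and one needs the integrand $K_u(x,y)(v(x)-v(y))$ to be absolutely integrable on $\R^{2N}\setminus(C\Omega)^2$ before the symmetry swap and the domain splitting become legitimate. This integrability I would deduce from the Hölder-type inequality together with the growth control (\ref{v1}) and the membership $u,v\in X$: the factor $a_{(x,y)}(\cdot)\,(u(x)-u(y))/|x-y|^s$ is controlled in the conjugate Musielak class $\overline{\varPhi}_{x,y}$, while $(v(x)-v(y))/|x-y|^s$ lies in the Musielak class associated with $\varPhi_{x,y}$ on the measure $d\mu=|x-y|^{-N}dxdy$, so their product is integrable and the three pieces $\Omega\times\Omega$, $\Omega\times C\Omega$ and $C\Omega\times\Omega$ may be recombined freely. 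Once this integrability is secured, the remaining steps are the routine bookkeeping described above.
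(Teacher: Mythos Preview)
Your approach is essentially the same as the paper's: use the antisymmetry of $K_u$ under $x\leftrightarrow y$ to reduce the left-hand side to $\int_{\R^{2N}\setminus(C\Omega)^2}K_u(x,y)v(x)\,dxdy$, then split the domain as $(\Omega\times\R^N)\cup(C\Omega\times\Omega)$ and identify each piece with the corresponding operator. Your additional care about absolute integrability and about the factor $2$ in the definition of $(-\Delta)^s_{a_{(x,.)}}$ goes beyond the paper's purely formal computation, which in fact silently drops that factor when identifying $\int_{\R^N}K_u(x,y)\,dy$ with $(-\Delta)^s_{a_{(x,.)}}u(x)$.
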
        
    \begin{proof}
    By symmetric, and since $\R^{2N}\setminus(C\Omega)^2=(\Omega\times \R^N)\cup (C \Omega\times\Omega)$. Then, we have
\begin{equation}\label{N7}
  \begin{aligned}
  \dfrac{1}{2} & \int_{\R^{2N}\setminus(C\Omega)^2}  a_{(x,y)}\left( \dfrac{|u(x)-u(y)|}{|x-y|^s }\right)\dfrac{u(x)-u(y)}{|x-y|^s}\dfrac{v(x)-v(y)}{|x-y|^s} \dfrac{dxdy}{|x-y|^{N}}\\
  &=\dfrac{1}{2}  \int_{\R^{2N}\setminus(C\Omega)^2} v(x) a_{(x,y)}\left( \dfrac{|u(x)-u(y)|}{|x-y|^s }\right)\dfrac{u(x)-u(y)}{|x-y|^s} \dfrac{dxdy}{|x-y|^{N+s}}\\
  &~~- \dfrac{1}{2}  \int_{\R^{2N}\setminus(C\Omega)^2} v(y) a_{(x,y)}\left( \dfrac{|u(x)-u(y)|}{|x-y|^s }\right)\dfrac{u(x)-u(y)}{|x-y|^s} \dfrac{dxdy}{|x-y|^{N+s}}\\
  &=\dfrac{1}{2}  \int_{\R^{2N}\setminus(C\Omega)^2} v(x) a_{(x,y)}\left( \dfrac{|u(x)-u(y)|}{|x-y|^s }\right)\dfrac{u(x)-u(y)}{|x-y|^s} \dfrac{dxdy}{|x-y|^{N+s}}\\
    &~~- \dfrac{1}{2}  \int_{\R^{2N}\setminus(C\Omega)^2} v(y) a_{(y,x)}\left( \dfrac{|u(x)-u(y)|}{|x-y|^s }\right)\dfrac{u(y)-u(x)}{|x-y|^s} \dfrac{dxdy}{|x-y|^{N+s}}\\
    &= \int_{\R^{2N}\setminus(C\Omega)^2} v(x) a_{(x,y)}\left( \dfrac{|u(x)-u(y)|}{|x-y|^s }\right)\dfrac{u(x)-u(y)}{|x-y|^s} \dfrac{dxdy}{|x-y|^{N+s}}\\
    &=\int_{\Omega}v(x)\int_{\R^{N}} a_{(x,y)}\left( \dfrac{|u(x)-u(y)|}{|x-y|^s }\right)\dfrac{u(x)-u(y)}{|x-y|^s} \dfrac{dxdy}{|x-y|^{N+s}}\\
    &~~ +\int_{C\Omega} v(x) \int_{\Omega} a_{(x,y)}\left( \dfrac{|u(x)-u(y)|}{|x-y|^s }\right)\dfrac{u(x)-u(y)}{|x-y|^s} \dfrac{dxdy}{|x-y|^{N+s}}\\
    &= \int_\Omega v (-\Delta)^s_{a_{(x,.)}}u dx+\int_{C \Omega} v \mathcal{N}^s_{a(x,.)}udx.\\
    \end{aligned}
\end{equation}
                       \end{proof}           
Based on the integration by part formula, we are now in position to state the natural definition of a weak solution of \hyperref[P]{$(\mathcal{P}_{a})$}. First, to simplify the notation, for arbitrary function $u, v\in X$, we set
{\small$$
\begin{aligned}
\mathcal{A}_{s}(u,v)   = & \dfrac{1}{2}  \int_{\R^{2N}\setminus(C\Omega)^2}  a_{(x,y)}\left( \dfrac{|u(x)-u(y)|}{|x-y|^s }\right)\dfrac{u(x)-u(y)}{|x-y|^s}\dfrac{v(x)-v(y)}{|x-y|^s} \dfrac{dxdy}{|x-y|^{N}}\\
&+\int_{\Omega}\widehat{a}_{x}(|u|)u vdx+\int_{C \Omega}\beta(x) \widehat{a}_{x}(|u|)u vdx.
\end{aligned}
$$}       
We  say that $u\in X$ is a weak solution of \hyperref[P]{$(\mathcal{P}_{a})$} is
\begin{equation}\label{N6}
\mathcal{A}_{s}(u,v) =\lambda\int_\Omega f(x,u)vdx
\end{equation}
for all $v\in X$. 
\begin{rem}
Let us first state the definition of a weak solution to our problem $(\ref{N6})$. Note
that here we are using that $a_{x,y}$ is symmetric.  Therefore, In \cite{benkirane,benkirane2}, the authors must set the condition (\ref{n4}), to be the definition of weak solution has a meaning.
\end{rem}

As a consequence of this definition (\ref{N6}), we have the following result.
\begin{prop}
Let $u\in X$ be a weak solution of \hyperref[P]{$(\mathcal{P}_{a})$}. Then
$$\mathcal{N}^s_{a(x,.)}u+\beta(x) \widehat{a}_{x}(|u|)u=0~~\text{ a.e in   } \R^N\setminus \Omega.$$
\end{prop}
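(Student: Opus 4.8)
The plan is to start from the integration by parts (Green) formula established above for the form $\mathcal{A}_s$, use it to rewrite the weak identity $(\ref{N6})$ as the sum of an interior integral over $\Omega$ and a boundary integral over $C\Omega=\R^N\setminus\Omega$, and then localize the test function in order to read off each term separately. Concretely, applying that formula to the nonlocal double-integral part of $\mathcal{A}_s(u,v)$ and collecting the terms supported in $\Omega$ and in $C\Omega$, the definition of weak solution becomes
$$\int_\Omega v\Big[(-\Delta)^s_{a_{(x,.)}}u+\widehat{a}_x(|u|)u-\lambda f(x,u)\Big]dx+\int_{C\Omega} v\Big[\mathcal{N}^s_{a(x,.)}u+\beta(x)\widehat{a}_x(|u|)u\Big]dx=0$$
for every $v\in X$.

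First I would test against functions $v\in X$ vanishing a.e.\ on $C\Omega$ (for instance functions in $C^2_0(\Omega)$ extended by zero, which are admissible test functions by Theorem \ref{TT}); the boundary integral then drops out and the fundamental lemma of the calculus of variations yields $(-\Delta)^s_{a_{(x,.)}}u+\widehat{a}_x(|u|)u=\lambda f(x,u)$ a.e.\ in $\Omega$. Substituting this interior equation back, the first integral cancels for an arbitrary $v\in X$, leaving
$$\int_{C\Omega} v\Big[\mathcal{N}^s_{a(x,.)}u+\beta(x)\widehat{a}_x(|u|)u\Big]dx=0\qquad\text{for all } v\in X.$$
Testing now against functions supported in $C\Omega$ and invoking the fundamental lemma once more gives $\mathcal{N}^s_{a(x,.)}u+\beta(x)\widehat{a}_x(|u|)u=0$ a.e.\ in $\R^N\setminus\Omega$, which is the claim.

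The routine computations — regrouping the double integral over $\R^{2N}\setminus(C\Omega)^2=(\Omega\times\R^N)\cup(C\Omega\times\Omega)$ and using the symmetry $(\ref{n4})$ of $a_{x,y}$ — are exactly those already carried out in the Green formula, so no new estimate is needed there. The main obstacle is the justification of the two applications of the fundamental lemma: one must check that the bracketed quantities $(-\Delta)^s_{a_{(x,.)}}u+\widehat{a}_x(|u|)u$ and $\mathcal{N}^s_{a(x,.)}u+\beta(x)\widehat{a}_x(|u|)u$ are locally integrable on $\Omega$ and on $C\Omega$ respectively, so that the displayed identities make sense pointwise, and that the chosen localized functions form a sufficiently rich family in $X$ — in particular that truncating or cutting off an element of $X$ so as to make it vanish on $\Omega$ or on $C\Omega$ keeps it in $X$. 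Both of these follow from the growth control $(\ref{v1})$ together with the modular estimates of Proposition \ref{Nmod} and the embedding of Proposition \ref{N10}, which bound the relevant integrals and guarantee the admissibility of the test functions.
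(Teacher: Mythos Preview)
Your argument reaches the right conclusion, but it takes an unnecessary detour compared with the paper's proof. The paper does not first derive the interior equation and then substitute it back; it simply tests the weak identity $(\ref{N6})$ directly against $v\in X$ with $v=0$ in $\Omega$. With such a $v$, the term $\int_\Omega \widehat a_x(|u|)uv\,dx$ and the right-hand side $\lambda\int_\Omega f(x,u)v\,dx$ vanish trivially, and the symmetrization calculation from the Green formula reduces the double integral to $\int_{C\Omega} v\,\mathcal N^s_{a(x,.)}u\,dx$. One is left immediately with
\[
\int_{\R^N\setminus\Omega}\big(\mathcal N^s_{a(x,.)}u+\beta(x)\widehat a_x(|u|)u\big)v\,dx=0
\]
for all such $v$, in particular for every $v\in C_c^\infty(\R^N\setminus\Omega)$, and the fundamental lemma finishes the proof.

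The practical difference is this: your route requires you to assert that $(-\Delta)^s_{a_{(x,.)}}u$ is a locally integrable function on $\Omega$, so that the interior bracket makes sense pointwise and the fundamental lemma applies there. That is a genuine extra regularity question (the kernel is singular on the diagonal), and the growth condition $(\ref{v1})$ together with Propositions \ref{Nmod} and \ref{N10} does not obviously deliver it. The paper's approach sidesteps this entirely: by taking $v\equiv 0$ on $\Omega$ from the start, one never needs the interior equation to hold in any pointwise sense. The only local integrability needed is that of $\mathcal N^s_{a(x,.)}u$ on $C\Omega$, which is much easier because for $x\notin\overline\Omega$ the integrand has no singularity. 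So your proof is not wrong, but the intermediate step you flag as ``the main obstacle'' is one the paper simply avoids.
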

\begin{proof}
First, we take $v\in X$ such that $v=0$ in $\Omega$ as a test function in $(\ref{N6})$, and similar calculus to $(\ref{N7})$.  We have 
$$
\begin{aligned}
0=&\mathcal{A}_{s}(u,v)\\
&=  \dfrac{1}{2}  \int_{\R^{2N}\setminus(C\Omega)^2}  a_{(x,y)}\left( \dfrac{|u(x)-u(y)|}{|x-y|^s }\right)\dfrac{u(x)-u(y)}{|x-y|^s}\dfrac{v(x)-v(y)}{|x-y|^s} \dfrac{dxdy}{|x-y|^{N}}\\
&~~+\int_{C \Omega}\beta(x) \widehat{a}_{x}(|u|)u vdx\\
&=\int_{\Omega}  \int_{\R^N\setminus\Omega} a_{(x,y)}\left( \dfrac{|u(x)-u(y)|}{|x-y|^s }\right)\dfrac{u(x)-u(y)}{|x-y|^s}v(x) \dfrac{dxdy}{|x-y|^{N+s}}\\
&~~+\int_{C \Omega}\beta(x) \widehat{a}_{x}(|u|)u vdx\\
&=\int_{\R^N\setminus\Omega}v(x)\int_{\Omega}   a_{(x,y)}\left( \dfrac{|u(x)-u(y)|}{|x-y|^s }\right)\dfrac{u(x)-u(y)}{|x-y|^s} \dfrac{dxdy}{|x-y|^{N+s}}\\
&~~+\int_{C \Omega}\beta(x) \widehat{a}_{x}(|u|)u vdx\\
&=\int_{\R^N\setminus\Omega}v(x)\mathcal{N}^s_{a(x,.)}u(x)dx+\int_{C \Omega}\beta(x) \widehat{a}_{x}(|u|)u vdx\\
&=\int_{\R^N\setminus\Omega}\left( \mathcal{N}^s_{a(x,.)}u(x)dx+\beta(x) \widehat{a}_{x}(|u|)u\right)  v(x) dx.\\
\end{aligned} 
$$
This implies that 
$$\int_{\R^N\setminus\Omega}\left( \mathcal{N}^s_{a(x,.)}u(x)dx+\beta(x) \widehat{a}_{x}(|u|)u\right)  v(x) dx=0$$
for any $v\in X$, and $v=0$ in $\Omega$. In particular is true for every $v\in C^\infty_c(\R^N\setminus \Omega)$, and so
$$\mathcal{N}^s_{a(x,.)}u+\beta(x) \widehat{a}_{x}(|u|)u=0~~\text{ a.e in   } \R^N\setminus \Omega.$$
\end{proof}
    \section{Existence results and proofs}\label{S4}
  The aim of this section is  to prove the existence of a weak solution of \hyperref[P]{$(\mathcal{P}_{a})$}. 
 In what follows, we will work with the modular norm $\|.\|$ and we denote by  $\left( X^*, ||.||_*\right)$         the dual space of $\left( X, ||.||\right)$.\\

    Next, we suppose that  $f : \Omega \times \R \rightarrow  \R$ is a Carath\'eodory function such that       
  \begin{equation}\label{f1}\tag{$f_1$}  |f(x,t)|\leqslant c_1|t|^{q(x)-1},  \end{equation}    
                \begin{equation}\label{f2}\tag{$f_2$} c_2|t|^{q(x)}\leqslant F(x,t):=\int_{0}^{t}f(x,\tau)d\tau,  \end{equation}
for all $x\in \Omega$ and all $t\in \R^N$,  where $c_1$ and $c_2$ are two positive constants, and $q\in C(\overline{\Omega})$ with $1<q^+\leqslant \varphi^-$.
\begin{rem}\label{rem1}
Since $q^+<\varphi^-$  it is easy to see that  $\widehat{\varPhi}_x$ dominates $t\mapsto|t|^{q(x)}$ near infinity. Then by Proposition $\ref{N10}$ the space $X$  is compactly embedded in  $L^{q(x)}(\Omega)$.
 \end{rem} 
\begin{ex}
We point out certain examples of function $f$  which satisfies the hypotheses \hyperref[f1]{$(f_1)$} and \hyperref[f2]{$(f_2)$}.
\begin{itemize}
\item $f(x,t)=q(x)|t|^{q(x)-2}t$, and $F(x,t)=|t|^q(x)$, where $q\in C(\overline{\Omega})$ satisfies $2\leqslant q(x)<p_s^*(x)$ for all $x\in \overline{\Omega}$.

\item $f(x,t)=q(x)|t|^{q(x)-2}t+(q(x)-2)\log(1+t^2)|t|^{q(x)-4}t+\dfrac{t}{1+t^2}|t|^{q(x)-2}$, and $F(x,t)=|t|^{q(x)}+\log(1+t^2)|t|^{q(x)-2}$, where $q\in C(\overline{\Omega})$ satisfies $4\leqslant q<p_s^*(x)$ for all $x\in \overline{\Omega}$.

\item $f(x,t)=q(x)|t|^{q(x)-2}t+(q(x)-1)\sin(\sin t)\times|t|^{q(x)-3}t\cos(\sin t)\cos t |t|^{q(x)-1}$, and $F(x,t)=|t|^q(x)+\sin(\sin t)|t|^{q(x)-1},$ where $q\in C(\overline{\Omega})$ satisfies $3\leqslant q(x)<p_s^*(x)$ for all $x\in \overline{\Omega}$.
\end{itemize}
\end{ex}

   For simplicity, we set
   \begin{equation}
   D^su:=\dfrac{u(x)-u(y)}{|x-y|^s}. \label{h} 
    \end{equation}       
       
   Now, we are ready to state our  existence result. 
      \begin{thm}\label{2.1.}
       Assume $f$ satisfy  \hyperref[f1]{$(f_1)$} and \hyperref[f2]{$(f_2)$}. Then there exist $\lambda_*$ and $\lambda^*$, such that for any $\lambda\in(0,\lambda_*)\cup [\lambda^*,\infty)$, problem \hyperref[P]{$(\mathcal{P}_{a})$} has a nontrivial weak solutions.    \end{thm}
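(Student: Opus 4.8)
The plan is to obtain weak solutions of \hyperref[P]{$(\mathcal{P}_{a})$} as critical points of the energy functional $E_\lambda:X\longrightarrow\R$,
\[
E_\lambda(u)=\tfrac{1}{2}\int_{\R^{2N}\setminus(C\Omega)^2}\varPhi_{x,y}\!\left(\tfrac{|u(x)-u(y)|}{|x-y|^s}\right)\tfrac{dxdy}{|x-y|^N}+\int_\Omega\widehat{\varPhi}_x(|u|)\,dx+\int_{C\Omega}\beta(x)\widehat{\varPhi}_x(|u|)\,dx-\lambda\int_\Omega F(x,u)\,dx,
\]
abbreviated $E_\lambda(u)=J(u)-\lambda\Lambda(u)$. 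First I would verify that $E_\lambda\in C^1(X,\R)$ with $E'_\lambda(u)v=\mathcal{A}_s(u,v)-\lambda\int_\Omega f(x,u)v\,dx$, so that $E'_\lambda(u)=0$ is exactly the weak formulation $(\ref{N6})$; this is differentiation under the integral, justified by $(\varPhi_1)$, the $\Delta_2$-condition, and the growth $(f_1)$. The two structural facts I would isolate are: (i) $J$ is convex (each $\varPhi_{x,y}$ and $\widehat{\varPhi}_x$ is a primitive of an increasing function, hence convex) and strongly continuous, therefore sequentially weakly lower semicontinuous; and (ii) by the compact embedding $X\hookrightarrow\hookrightarrow L^{q(x)}(\Omega)$ of Proposition \ref{N10} together with $(f_1)$, the Nemytskii functional $\Lambda$ is sequentially weakly continuous.

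For the large-parameter regime $\lambda\ge\lambda^*$ I would apply the direct method (Theorem \ref{th2}). Coercivity follows from Proposition \ref{Nmod}: for $\|u\|>1$ the nonnegativity of the boundary terms gives $J(u)\ge\tfrac{1}{2}\rho_s(u)\ge\tfrac{1}{2}\|u\|^{\varphi^-}$, while $(f_1)$ and the embedding give $\Lambda(u)\le C\|u\|^{q^+}$; since $q^+<\varphi^-$ by the standing hypothesis on $q$, one gets $E_\lambda(u)\to+\infty$ as $\|u\|\to\infty$ for every $\lambda$. Combined with the weak lower semicontinuity from (i)--(ii), Theorem \ref{th2} produces a global minimizer $u_\lambda$. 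To guarantee $u_\lambda\neq0$ I would fix $w\in X$ with $\Lambda(w)>0$ (available by $(f_2)$) and note that $\inf_X E_\lambda\le E_\lambda(w)=J(w)-\lambda\Lambda(w)<0=E_\lambda(0)$ once $\lambda$ exceeds the threshold $\lambda^*:=J(w)/\Lambda(w)$, the endpoint being absorbed since testing with $tw$ for small $t>0$ already forces $\inf_X E_\lambda<0$.

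For the small-parameter regime $\lambda\in(0,\lambda_*)$ I would localize and invoke Ekeland's principle (Theorem \ref{th1}). Using the lower modular bound $\rho_s(u)\ge\|u\|^{\varphi^+}$ for $\|u\|<1$ and $\Lambda(u)\le C\|u\|^{q^-}$, on the sphere $\|u\|=\rho$ (with $\rho<1$ fixed) one obtains $E_\lambda(u)\ge\tfrac{1}{2}\rho^{\varphi^+}-\lambda C\rho^{q^-}$; choosing $\lambda_*$ so small that the right-hand side is a positive constant $m>0$ for all $\lambda<\lambda_*$ separates the interior of $\overline{B}_\rho$ from its boundary. Since again $\inf_{\overline{B}_\rho}E_\lambda<0$ by testing with $tw$, Ekeland's principle on the complete metric space $\overline{B}_\rho$ yields a minimizing sequence $(v_n)$ with $E_\lambda(v_n)\to\inf_{\overline{B}_\rho}E_\lambda<0$ and $E'_\lambda(v_n)\to0$ in $X^*$, and the energy gap $m>0$ keeps $(v_n)$ in the open ball, so no boundary constraint multiplier survives.

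The main obstacle in both regimes is the passage from such a Palais--Smale (or minimizing) sequence to a genuine critical point, that is, the compactness of $E_\lambda$. I would extract a weak limit $u$ (the sequence is bounded by coercivity), use the compact embedding to get $\Lambda'(v_n)\to\Lambda'(u)$ strongly in $X^*$, and then establish the $(S_+)$-property of the modular operator $J'$: from $\limsup_n\langle J'(v_n),v_n-u\rangle\le0$ together with the strict monotonicity and the convexity supplied by $(\varPhi_1)$ and $(\varPhi_2)$, deduce $v_n\to u$ strongly in $X$. This $(S_+)$ argument for the \emph{nonlocal}, $(x,y)$-dependent modular operator is the technically delicate point; once it is in place, continuity of $E'_\lambda$ gives $E'_\lambda(u)=0$, the strict negativity of the energy forces $u\neq0$, and the conclusion follows for every $\lambda\in(0,\lambda_*)\cup[\lambda^*,\infty)$.
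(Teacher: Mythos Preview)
Your proposal is correct and follows essentially the same route as the paper: Ekeland's variational principle on a small ball for $\lambda\in(0,\lambda_*)$ and the direct method (coercivity plus weak lower semicontinuity) for $\lambda\ge\lambda^*$, with the modular estimates of Proposition~\ref{Nmod} supplying the geometry and the compact embedding handling the nonlinear term. The technically delicate step you flag---the $(S_+)$-type property for $J'$ under \hyperref[v1]{$(\varPhi_1)$}--\hyperref[f2.]{$(\varPhi_2)$}---is exactly what the paper isolates in Lemma~\ref{4.4.}, and your choice of test function for nontriviality at $\lambda^*$ is a harmless variant of the paper's explicit $u_0=t_0\chi_\Omega$.
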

           
    For each $\lambda>0$, we define the energy  functional $J_\lambda :  X\longrightarrow \R$ by
    {\small\begin{equation}\label{14.}
    \begin{aligned}
    J_\lambda(u)=&\displaystyle\dfrac{1}{2}  \int_{\R^{2N}\setminus(C\Omega)^2} \varPhi_{x,y}\left( \dfrac{ |u(x)- u(y)|}{|x-y|^s}\right) \dfrac{dxdy}{|x-y|^N}+\int_{\Omega}\widehat{\varPhi}_x\left( |u(x)|\right) dx\\
    &+\int_{C \Omega}\beta(x) \widehat{\varPhi}_x\left( |u(x)|\right) dx-\lambda\int_{\Omega}F(x,u)dx.
        \end{aligned}
    \end{equation}}
         \begin{rem}
    We note that the functional $J_\lambda :  X\longrightarrow \R$ in $(\ref{14.})$ is well
    defined. Indeed, if $u\in X$, then, we have  $u \in  L^{q(x)}(\Omega)$. Hence, by the condition \hyperref[f1]{$(f_1)$},
    $$ |F(x,u)|\leqslant\int_{0}^{u}|f(x,t)|dt=c_1|u|^{q(x)}$$
    and thus, 
    $$\int_{\Omega}|F(x,u)|dx<\infty.$$
         \end{rem}
   We first  establish some basis properties of $J_\lambda$.
   \begin{prop}\label{prop1}
    Assume condition \hyperref[f1]{$(f_1)$} is satisfied. Then, for each $\lambda>0$, $J_\lambda\in C^1\left( X, \R \right)$ with the derivative given by 
  $$
  \begin{aligned}
  \left\langle J'_\lambda(u),v\right\rangle =& \dfrac{1}{2}  \int_{\R^{2N}\setminus(C\Omega)^2} a_{x,y}(|D^su|)D^su D^svd\mu+\int_{\Omega}\widehat{a}_{x}(|u|)u vdx\\
  &+\int_{C \Omega}\beta(x) \widehat{a}_{x}(|u|)u vdx-\lambda\int_{\Omega}f(x,u)vdx
    \end{aligned}
  $$
   for all $u,v \in X$.\end{prop}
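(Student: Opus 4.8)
The plan is to show that the functional $J_\lambda$ splits as $J_\lambda = \Psi - \lambda \Theta$, where $\Psi$ collects the three modular terms (the Gagliardo double integral and the two $\widehat{\varPhi}_x$-integrals) and $\Theta(u) = \int_\Omega F(x,u)\,dx$, and to establish that each piece is $C^1$ with the stated Gateaux derivative, then upgrade Gateaux to Fréchet via continuity of the derivative. First I would treat the modular part $\Psi$. For fixed $u,v \in X$ and $t$ near $0$, I would differentiate under the integral sign: the integrand $\varPhi_{x,y}\!\bigl(|D^s(u+tv)|\bigr)$ has $t$-derivative $\varphi_{x,y}(|D^s(u+tv)|)\,\mathrm{sgn}(D^s(u+tv))\,D^sv$, which by definition of $a_{x,y}$ equals $a_{x,y}(|D^s(u+tv)|)D^s(u+tv)\,D^sv$ divided by the appropriate power. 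To justify passing the limit inside, I would use the mean value theorem to bound the difference quotient and then dominate it using the $\Delta_2$-condition \eqref{r1}--\eqref{rr1} together with the growth relation \eqref{v1}, so that the dominating function lies in $L^1$ over $\R^{2N}\setminus(C\Omega)^2$ against $d\mu$; the same argument with \eqref{A2} handles the two zeroth-order terms $\int_\Omega \widehat{\varPhi}_x(|u|)\,dx$ and $\int_{C\Omega}\beta(x)\widehat{\varPhi}_x(|u|)\,dx$. This yields the Gateaux derivative of $\Psi$ in exactly the form written.

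Next I would handle $\Theta(u)=\int_\Omega F(x,u)\,dx$. Here the growth bound \hyperref[f1]{$(f_1)$}, namely $|f(x,t)|\le c_1|t|^{q(x)-1}$, controls the difference quotient of $F(x,u+tv)$ by $c_1(|u|+|v|)^{q(x)-1}|v|$, which is integrable because $X \hookrightarrow L^{q(x)}(\Omega)$ by Remark \ref{rem1} (so $u,v\in L^{q(x)}(\Omega)$ and the product lies in $L^1$ by Hölder's inequality in variable-exponent spaces). Dominated convergence then gives $\langle \Theta'(u),v\rangle = \int_\Omega f(x,u)v\,dx$, and combining with the derivative of $\Psi$ produces the claimed formula for $\langle J'_\lambda(u),v\rangle$.

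Finally I would prove $J_\lambda \in C^1$ by showing the map $u\mapsto J'_\lambda(u)$ is continuous from $X$ into $X^*$. The standard route is to take $u_n \to u$ in $X$, pass to a subsequence with $D^su_n \to D^su$ a.e. (as in the completeness proof in Section \ref{S3}) and $u_n\to u$ in $L^{q(x)}(\Omega)$ by the compact embedding, and then invoke a Krasnoselskii-type continuity argument: the Nemytskii operators $t\mapsto a_{x,y}(|t|)t$ and $t\mapsto f(x,t)$ are continuous and satisfy the growth bounds, so they are continuous between the relevant modular (resp. Lebesgue) spaces. The main obstacle, and the step deserving the most care, is the domination and continuity for the nonlocal modular term, since $a_{x,y}$ depends on the pair $(x,y)$ and one works with the measure $d\mu = |x-y|^{-N}\,dxdy$ over the unbounded set $\R^{2N}\setminus(C\Omega)^2$; the estimates must be phrased through the modular $\rho_s$ and the equivalence \eqref{Nmod1}--\eqref{Nmod2} rather than through pointwise bounds. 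Once continuity of each derivative term is secured, the Gateaux-to-Fréchet upgrade is automatic, completing the proof.
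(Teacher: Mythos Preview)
Your proposal is correct and follows the standard route for establishing $C^1$-regularity of modular-type energy functionals: split into the modular part and the Nemytskii part, compute Gateaux derivatives via dominated convergence using the $\Delta_2$-condition and the growth bound \hyperref[f1]{$(f_1)$}, then upgrade to Fr\'echet by proving continuity of $u\mapsto J'_\lambda(u)$ through Krasnoselskii-type arguments. The paper itself does not give a proof but simply refers to \cite[Proposition~3.1]{benkirane}, where exactly this scheme is carried out in the closely related setting of $W^sL_{\varPhi_{x,y}}(\Omega)$; your outline is precisely the adaptation of that argument to the space $X$ and the measure $d\mu$ on $\R^{2N}\setminus(C\Omega)^2$, so there is nothing to compare beyond noting that you have spelled out what the paper leaves implicit.
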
 
   Proof of this Proposition is similar to \cite[Proposition 3.1]{benkirane}.\\
   
    Now, define the functionals $I_i : X\longrightarrow \R$ $i=1,2$ by 
   $$
   \begin{aligned}
      I_1(u)=&\displaystyle\dfrac{1}{2}  \int_{\R^{2N}\setminus(C\Omega)^2} \varPhi_{x,y}\left( \dfrac{ |u(x)- u(y)|}{|x-y|^s}\right) \dfrac{dxdy}{|x-y|^N}+\int_{\Omega}\widehat{\varPhi}_x\left( |u(x)|\right) dx\\
       &+\int_{C \Omega}\beta(x) \widehat{\varPhi}_x\left( |u(x)|\right) dx,
           \end{aligned}$$
   and
   $$I_2(u)= \int_{\Omega}F(x,u)dx.$$
   \begin{prop}\label{lem3}
  The functional $J_\lambda$ is weakly lower semi continuous.
   \end{prop}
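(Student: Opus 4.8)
The plan is to show weak lower semicontinuity of $J_\lambda = I_1 - \lambda I_2$ by treating the two pieces separately: proving that the ``energy'' part $I_1$ is weakly lower semicontinuous (indeed it is convex and continuous, hence weakly lsc), and that the ``reaction'' part $\lambda I_2$ is weakly continuous (so subtracting it preserves lower semicontinuity). Concretely, I would fix a sequence $u_n \rightharpoonup u$ weakly in $X$ and establish
\begin{equation*}
I_1(u) \leqslant \liminf_{n\to\infty} I_1(u_n) \qquad\text{and}\qquad \lim_{n\to\infty} I_2(u_n) = I_2(u),
\end{equation*}
from which $J_\lambda(u) = I_1(u) - \lambda I_2(u) \leqslant \liminf_{n\to\infty}\big(I_1(u_n) - \lambda I_2(u_n)\big) = \liminf_{n\to\infty} J_\lambda(u_n)$ follows immediately.

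First I would handle $I_1$. The key observation is that $I_1$ is convex: the map $t \mapsto \varPhi_{x,y}(t)$ is convex (it is an integral of the increasing function $\varphi_{x,y}$) and the map $u \mapsto D^s u = (u(x)-u(y))/|x-y|^s$ is linear, so $u \mapsto \varPhi_{x,y}(|D^s u|)$ is convex; likewise $u \mapsto \widehat{\varPhi}_x(|u(x)|)$ is convex, and the weight $\beta \geqslant 0$ preserves convexity of the boundary term. Thus $I_1$ is a sum of convex integrals, hence convex. Moreover $I_1$ is strongly continuous on $X$ (this is essentially the modular/norm equivalence of Proposition~\ref{Nmod} together with the $\Delta_2$-condition (\ref{r1})--(\ref{rr1})). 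A convex functional that is strongly continuous on a Banach space is weakly lower semicontinuous by the standard Mazur-type argument: its sublevel sets are convex and closed, hence weakly closed. Alternatively, one can argue directly via Fatou's lemma, passing to a subsequence along which $D^s u_n \to D^s u$ and $u_n \to u$ pointwise a.e. (extracted using the compact embedding of Proposition~\ref{N10}) and invoking nonnegativity of the integrands.

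Next I would treat $I_2(u) = \int_\Omega F(x,u)\,dx$. By Remark~\ref{rem1}, since $q^+ < \varphi^-$, the space $X$ is compactly embedded in $L^{q(x)}(\Omega)$, so from $u_n \rightharpoonup u$ in $X$ we obtain $u_n \to u$ strongly in $L^{q(x)}(\Omega)$, and hence (up to a subsequence) $u_n \to u$ a.e. in $\Omega$ with a dominating function in $L^{q(x)}(\Omega)$. The growth bound (\ref{f1}) gives $|F(x,u_n)| \leqslant c_1 |u_n|^{q(x)}$, so by the generalized dominated convergence theorem (or the continuity of the Nemytskii operator associated with $F$ under the subcritical growth condition) one concludes $I_2(u_n) \to I_2(u)$, i.e. $I_2$ is weakly continuous.

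The main obstacle I anticipate is the rigorous justification of the continuity/weak-continuity of $I_2$ in the \emph{variable-exponent} Lebesgue space $L^{q(x)}(\Omega)$ rather than a fixed-exponent one: one must be careful to use the modular-convergence version of dominated convergence and to verify that the a.e.\ convergent subsequence admits a common $L^{q(x)}$-dominant, since the norm-modular relations are only piecewise power-type. A subsequence-of-subsequence argument (showing every subsequence has a further subsequence converging to the same limit $I_2(u)$) removes the need to pass to subsequences in the final statement. For $I_1$, the convexity argument is clean, so the only care needed there is confirming strong continuity, which reduces to the already-established modular estimates of Proposition~\ref{Nmod}.
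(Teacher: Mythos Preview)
Your proposal is correct and follows essentially the same decomposition as the paper: establish that $I_1$ is convex and hence weakly lower semicontinuous, and that $I_2$ is weakly continuous via the compact embedding of $X$ into $L^{q(x)}(\Omega)$. The only cosmetic difference is that the paper derives weak lower semicontinuity of $I_1$ from the subgradient inequality $I_1(u_n)-I_1(u)\geqslant \langle I_1'(u),u_n-u\rangle$ (using the differentiability from Proposition~\ref{prop1}) rather than your Mazur sublevel-set argument, and it handles $I_2$ more tersely by simply invoking $I_2\in C^1(X,\R)$ where you spell out the Nemytskii/dominated-convergence details.
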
  
    \begin{proof} First, note that $I_1$
       is lower semi-continuous in the weak topology of $X$. Indeed, 
        since $\varPhi_{x,y}$ is a convex function so $I_1$ is also convex. Then, let $\left\lbrace u_n\right\rbrace \subset X$ with $u_n\rightharpoonup u$ weakly in $ X$, then by convexity of $I_1$ we have 
     $$I_1(u_n)-I_1(u)\geqslant \left\langle I_1'(u),u_n-u\right\rangle,$$ 
          	and hence, we obtain $$I_1(u)\leqslant \liminf I_1(u_n),$$ that is, the map $I_1$
    is  weakly lower semi continuous.          
  On the other hand, since $I_2\in  C^1\left( X, \|.\|\right),$ we have
          $$\lim\limits_{n\rightarrow \infty}\int_{\Omega}F(x,u_n)dx=\int_{\Omega}F(x,u)dx.$$
         Thus, we find 
          $$J_\lambda(u)\leqslant \liminf J_\lambda(u_n).$$
          Therefore, $J_\lambda$ is weakly lower semi continuous and Proposition $\ref{lem3}$ is verified. \end{proof}
      \begin{lem}\label{4.4.}
        Assume that the sequence $\left\lbrace u_n\right\rbrace $ converges weakly to $u$ in $X$ and 
        \begin{equation}\label{35.}
        \limsup_{n\rightarrow \infty}\left\langle I_1'(u_n),u_n-u\right\rangle \leqslant 0.
        \end{equation}
        Then the sequence $\left\lbrace u_n\right\rbrace$  is convergence strongly to $u$ in $X$.
        \end{lem}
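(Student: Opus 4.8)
The plan is to use the convexity of $I_1$ and the structural condition $(\varPhi_2)$ to convert the ``energy'' hypothesis $(\ref{35.})$ into convergence of the modular $\rho_s(u_n-u)$, and then to invoke the modular--norm relations of Proposition $\ref{Nmod}$. First I would note that, since $\varPhi_{x,y}$ is convex, $I_1$ is a convex functional of class $C^1$ (its derivative is the sum of the first three terms of $J_\lambda'$ computed in Proposition $\ref{prop1}$), so $I_1':X\to X^*$ is a monotone operator and
\[
\langle I_1'(u_n)-I_1'(u),u_n-u\rangle\geqslant 0 .
\]
Because $u_n\rightharpoonup u$ and $I_1'(u)\in X^*$, we have $\langle I_1'(u),u_n-u\rangle\to 0$; together with the monotonicity and the hypothesis $(\ref{35.})$ this forces $\lim_{n\to\infty}\langle I_1'(u_n)-I_1'(u),u_n-u\rangle=0$.

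Next I would write this quantity explicitly. Using the symmetry of $a_{x,y}$, the monotonicity of the increasing homeomorphisms $\varphi_{x,y}$ and $\widehat{\varphi}_x$, and $\beta\geqslant 0$, the difference $\langle I_1'(u_n)-I_1'(u),u_n-u\rangle$ equals
\begin{multline*}
\tfrac12\int_{\R^{2N}\setminus(C\Omega)^2}\big[\varphi_{x,y}(D^su_n)-\varphi_{x,y}(D^su)\big]\big(D^su_n-D^su\big)\,d\mu\\
+\int_{\Omega}\big[\widehat{\varphi}_x(u_n)-\widehat{\varphi}_x(u)\big](u_n-u)\,dx
+\int_{C\Omega}\beta(x)\big[\widehat{\varphi}_x(u_n)-\widehat{\varphi}_x(u)\big](u_n-u)\,dx,
\end{multline*}
which is a sum of three nonnegative integrals. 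Since their total tends to $0$, each of them tends to $0$ separately.

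The heart of the argument, and the step I expect to be the main obstacle, is to pass from this to the convergence of the modular. Here $(\varPhi_2)$ is essential: since $t\mapsto\varPhi_{x,y}(\sqrt{t})$ is convex, one obtains (uniformly in $(x,y)$, after adjusting the constants with the growth bound $(\varPhi_1)$) a pointwise inequality of the form
\[
\varPhi_{x,y}\Big(\tfrac{|\xi-\eta|}{2}\Big)\leqslant \tfrac12\big[\varphi_{x,y}(\xi)-\varphi_{x,y}(\eta)\big](\xi-\eta),\qquad \xi,\eta\in\R,
\]
and the analogous inequality for $\widehat{\varPhi}_x$, which satisfies $(\varPhi_2)$ as the diagonal case $y=x$. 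Taking $\xi=D^su_n,\ \eta=D^su$ in the first integral and $\xi=u_n,\ \eta=u$ in the remaining two, the previous step yields
\begin{multline*}
\int_{\R^{2N}\setminus(C\Omega)^2}\varPhi_{x,y}\Big(\tfrac{|D^su_n-D^su|}{2}\Big)\,d\mu\to0,\qquad
\int_{\Omega}\widehat{\varPhi}_x\Big(\tfrac{|u_n-u|}{2}\Big)\,dx\to0,\\
\int_{C\Omega}\beta(x)\,\widehat{\varPhi}_x\Big(\tfrac{|u_n-u|}{2}\Big)\,dx\to0 .
\end{multline*}
Using the $\Delta_2$-conditions $(\ref{r1})$ and $(\ref{rr1})$ to absorb the inner factor $\tfrac12$, these three limits add up to $\rho_s(u_n-u)\to 0$.

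Finally, by the modular--norm relations $(\ref{Nmod1})$--$(\ref{Nmod2})$ of Proposition $\ref{Nmod}$, $\rho_s(u_n-u)\to 0$ implies $\|u_n-u\|\to 0$, i.e. $u_n\to u$ strongly in $X$, which is the claim. The delicate point is justifying the convexity inequality uniformly in $(x,y)\in\overline{\Omega}\times\overline{\Omega}$ from $(\varPhi_2)$ and $(\varPhi_1)$; should this be awkward to obtain directly, an alternative is to use that the three nonnegative integrands of the second paragraph tend to $0$ in $L^1$, hence (up to a subsequence) a.e., so that strict monotonicity of $\varphi_{x,y}$ and $\widehat{\varphi}_x$ gives $D^su_n\to D^su$ and $u_n\to u$ a.e., and then to recover $\rho_s(u_n-u)\to 0$ by a Vitali/equi-integrability argument before concluding as above.
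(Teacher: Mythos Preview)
Your argument is correct, but it is organised quite differently from the paper's proof.

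The paper first shows, from convexity of $I_1$ and the hypothesis, that $I_1(u_n)\to I_1(u)$ along a subsequence, and then argues by contradiction: if $u_n\not\to u$, then $I_1\big(\tfrac{u_n-u}{2}\big)$ stays bounded away from $0$, and the Clarkson--type inequality
\[
\tfrac12 I_1(u_n)+\tfrac12 I_1(u)-I_1\Big(\tfrac{u_n+u}{2}\Big)\ \geqslant\ I_1\Big(\tfrac{u_n-u}{2}\Big),
\]
taken from \cite[Lemma~2.1]{Lam} under $(\varPhi_1)$--$(\varPhi_2)$, forces $\limsup I_1\big(\tfrac{u_n+u}{2}\big)<I_1(u)$, contradicting weak lower semicontinuity. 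Your route instead goes through monotonicity of $I_1'$ to obtain $\langle I_1'(u_n)-I_1'(u),u_n-u\rangle\to 0$, and then a \emph{pointwise} inequality to get $\rho_s(u_n-u)\to 0$ directly, i.e.\ you are essentially proving the $(S_+)$ property of $I_1'$.

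Two small remarks on your key step. First, the constant in your pointwise inequality should be $\tfrac14$ rather than $\tfrac12$: from $(\varPhi_2)$ one has the Lamperti inequality $\tfrac12\varPhi_{x,y}(\xi)+\tfrac12\varPhi_{x,y}(\eta)-\varPhi_{x,y}\big(\tfrac{\xi+\eta}{2}\big)\geqslant\varPhi_{x,y}\big(\tfrac{|\xi-\eta|}{2}\big)$, while ordinary convexity gives $\tfrac12\varPhi_{x,y}(\xi)+\tfrac12\varPhi_{x,y}(\eta)-\varPhi_{x,y}\big(\tfrac{\xi+\eta}{2}\big)\leqslant\tfrac14\big[\varphi_{x,y}(\xi)-\varphi_{x,y}(\eta)\big](\xi-\eta)$; combining these yields your estimate with $\tfrac14$, which is of course enough. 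Second, $(\varPhi_1)$ is not actually needed for this pointwise bound --- only $(\varPhi_2)$ and convexity are --- so you can drop the parenthetical ``after adjusting the constants with the growth bound $(\varPhi_1)$''. Thus the ``delicate point'' you flag is in fact the very same Lamperti--Clarkson inequality the paper invokes, applied pointwise rather than at the level of the functional; the two proofs share this ingredient, and your direct modular argument avoids the contradiction step and the passage through $I_1\big(\tfrac{u_n+u}{2}\big)$. The fallback Vitali route you sketch is unnecessary.
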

                         \begin{proof} Since $u_n$ converges weakly to $u$ in $X$, then $\left\lbrace ||u_n||\right\rbrace $ is a bounded sequence of real numbers. Then by Proposition $\ref{Nmod}$, we deduce that $\left\lbrace I_1(u_n)\right\rbrace$ is bounded. So for a subsequence, we deduce that, 
              $$I_1(u_n)\longrightarrow c.$$
              Or since $I_1$ is weak lower semi continuous, we get 
              $$I_1(u)\leqslant \liminf_{n\rightarrow \infty}I_1(u_n)=c.$$
     On the other hand, by the convexity of $I_1$, we have 
     $$I_1(u)\geqslant I_1(u_n)+\left\langle I_1'(u_n),u_n-u\right\rangle .$$   
     Next, by the hypothesis  $(\ref{35.})$, we conclude that $$I_1(u)=c.$$
     Since $\left\lbrace \dfrac{u_n+u}{2} \right\rbrace $ converges weakly to $u$ in $X$, so since $I_1$ is  sequentially weakly lower semicontinuous :
     \begin{equation}\label{32.}
     c=I_1(u)\leqslant \liminf_{n\rightarrow \infty}I_1\left( \dfrac{u_n+u}{2}\right). \end{equation}
      We assume by  contradiction that $\left\lbrace u_n\right\rbrace$ does not converge to $u$ in $X$. Hence,  there exist a subsequence of $\left\lbrace u_n\right\rbrace $, still denoted by $\left\lbrace u_n\right\rbrace $ and there exits $\varepsilon_0>0$ such that 
        $$\Bigg|\Bigg|\dfrac{u_n-u}{2}\Bigg|\Bigg|\geqslant \dfrac{\varepsilon_0}{2},$$
        by Proposition $\ref{Nmod}$, we have
        $$I_1\left( \dfrac{u_n-u}{2}\right) \geqslant\max\left\lbrace \varepsilon_0^{\varphi^-},\varepsilon_0^{\varphi^+}\right\rbrace.$$
        On the other hand, by the conditions (\ref{v1}) and (\ref{f2.}), we can apply \cite[Lemma 2.1]{Lam}  in order to obtain         \begin{equation}\label{33.}
        \dfrac{1}{2}I_1(u_n)+\dfrac{1}{2}I_1(u)-I_1\left( \dfrac{u_n+u}{2}\right) \geqslant I_1\left( \dfrac{u_n-u}{2}\right) \geqslant \max\left\lbrace \varepsilon_0^{\varphi^-}, \varepsilon_0^{\varphi^+}\right\rbrace .
        \end{equation}
        It follows from $(\ref{33.})$ that
        \begin{equation}\label{34.}
        I_1(u)-\max\left\lbrace \varepsilon_0^{\varphi^-}, \varepsilon_0^{\varphi^+}\right\rbrace \geqslant \limsup_{n\rightarrow \infty}I_1\left( \dfrac{u_n+u}{2}\right),
        \end{equation}
        from $(\ref{32.})$ and $(\ref{34.})$ we obtain a contradiction. This shows that $\left\lbrace u_n\right\rbrace$ converges strongly to $u$ in $X$.
           \end{proof}
   \begin{lem}\label{lem5}
   Assume the hypotheses of Theorem $\ref{2.1.}$  are fulfilled. Then there exist $\rho, \alpha>0$ and $\lambda_*>0$ such that for any $\lambda\in (0,\lambda_*),~~J_\lambda(u)\geqslant \alpha>0$ for any $u\in X$ with $||u||=\rho$.
   \end{lem}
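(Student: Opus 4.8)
The plan is to exploit the decomposition $J_\lambda = I_1 - \lambda I_2$ and to show that on a small sphere $\|u\|=\rho$ the ``good'' part $I_1$ dominates a positive power of $\rho$, while the ``bad'' part $\lambda I_2$ can be rendered negligible by shrinking $\lambda$. First I would observe that, since every integrand defining $I_1$ is nonnegative and its three coefficients ($\tfrac12$, $1$ and $1$) are all at least $\tfrac12$, one has $I_1(u)\geqslant \tfrac12\rho_s(u)$ for every $u\in X$. Restricting then to $\|u\|=\rho$ with $\rho\in(0,1)$ and invoking the modular estimate \eqref{Nmod2} of Proposition \ref{Nmod}, I obtain the lower bound $I_1(u)\geqslant \tfrac12\|u\|^{\varphi^+}=\tfrac12\rho^{\varphi^+}$.

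Next I would bound the nonlinear term $I_2(u)=\int_\Omega F(x,u)\,dx$ from above. By \hyperref[f1]{$(f_1)$} we have $|F(x,u)|\leqslant c\,|u|^{q(x)}$ pointwise, so $I_2(u)\leqslant c\int_\Omega|u|^{q(x)}\,dx$. Because $1<q^+\leqslant\varphi^-$, Remark \ref{rem1} furnishes the (compact, hence continuous) embedding $X\hookrightarrow L^{q(x)}(\Omega)$, and thus a constant $c_*>0$ with $\|u\|_{L^{q(x)}(\Omega)}\leqslant c_*\|u\|$. Choosing $\rho$ small enough that $c_*\rho\leqslant 1$, the modular--norm relation for variable exponent Lebesgue spaces yields $\int_\Omega|u|^{q(x)}\,dx\leqslant \|u\|_{L^{q(x)}(\Omega)}^{q^-}\leqslant (c_*\rho)^{q^-}$, so that $I_2(u)\leqslant c\,c_*^{q^-}\rho^{q^-}$ on the sphere $\|u\|=\rho$.

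Combining the two estimates, for every $u$ with $\|u\|=\rho$ I get $J_\lambda(u)\geqslant \tfrac12\rho^{\varphi^+}-\lambda\,c\,c_*^{q^-}\rho^{q^-}$. I would first fix $\rho\in(0,1)$ with $c_*\rho\leqslant 1$, which freezes both constants, and then set $\lambda_*:=\dfrac{\rho^{\varphi^+}}{4\,c\,c_*^{q^-}\rho^{q^-}}$, so that for every $\lambda\in(0,\lambda_*)$ the subtracted term is at most $\tfrac14\rho^{\varphi^+}$. This produces $J_\lambda(u)\geqslant \tfrac14\rho^{\varphi^+}=:\alpha>0$, which is exactly the claim.

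The argument is essentially a bookkeeping exercise, and the only delicate point -- the step I expect to be the main (minor) obstacle -- is ensuring that both modular inequalities are used on their correct ``subunit'' branches: one must keep $\rho<1$ so that \eqref{Nmod2} delivers the exponent $\varphi^+$ (and not $\varphi^-$), and one must keep $c_*\rho\leqslant 1$ so that the variable exponent modular is controlled by $\|u\|_{L^{q(x)}(\Omega)}^{q^-}$. Since $q^-\leqslant q^+\leqslant\varphi^-\leqslant\varphi^+$, the exponents align so that, once $\rho$ is frozen, the competition between $\rho^{\varphi^+}$ and $\rho^{q^-}$ is resolved purely by the smallness of $\lambda$, which is precisely what makes the threshold $\lambda_*$ well defined and positive.
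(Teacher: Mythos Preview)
Your proposal is correct and follows essentially the same route as the paper: bound $I_1$ below via the modular inequality \eqref{Nmod2} on a small sphere, bound $I_2$ above via $(f_1)$ together with the embedding $X\hookrightarrow L^{q(x)}(\Omega)$ (choosing $\rho$ so that the $L^{q(x)}$-norm is $\leqslant 1$), and then pick $\lambda_*$ to absorb the negative term. Your write-up is in fact slightly more careful than the paper's, since you explicitly track the factor $\tfrac12$ in $I_1(u)\geqslant\tfrac12\rho_s(u)$ and you specify the exponent $q^-$ rather than the paper's ambiguous $q^{\pm}$.
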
   
                \begin{proof}  Since $X$ is continuously embedded in $L^{q(x)}(\Omega)$. Then there exists a positive constant $c>0$ such that 
     \begin{equation}\label{28}
     ||u||_{q(x)}\leqslant c||u|| ~~\forall u\in X.
     \end{equation}   
    We fix $\rho\in (0,1)$ such that $\rho<\dfrac{1}{c}$. Then relation $(\ref{28})$ implies that for any $u\in X$ with $||u||=\rho$ : 
    $$\begin{aligned}
    J_\lambda(u)&\geqslant ||u||^{\varphi^+}-\lambda c_2 c^{q^\pm}||u||^{q^\pm}\\
    &=\rho^{q^\pm}\left( \rho^{\varphi^+-{q^\pm}}-\lambda c^{q^\pm} c_2\right).
    \end{aligned} 
     $$
   By the above inequality, we remark if we define 
   \begin{equation}\label{29}
   \lambda_*=\dfrac{\rho^{\varphi^+-{q^\pm}}}{2c_2 c^{q^\pm}}.
   \end{equation}    
     Then for any $u\in X$ with $||u||=\rho$, there exists $\alpha=\dfrac{\rho^{\varphi^+}}{2}>0$ such that 
     $$J_\lambda(u)\geqslant \alpha>0,~~\forall \lambda\in (0,\lambda_*).$$
     The proof of Lemma $\ref{lem5}$ is complete.           \end{proof}
    \begin{lem}\label{lem6}
    Assume the hypotheses of Theorem $\ref{2.1.}$  are fulfilled. Then there exists $\theta\in X$ such that $\theta>0$ and $J_\lambda(t\theta)<0$ for $t>0$ small enough.
    \end{lem}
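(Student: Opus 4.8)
The plan is to test the functional $J_\lambda$ along the ray $t\mapsto t\theta$ for a single fixed positive function $\theta$, and to exploit the fact that the elliptic part $I_1$ scales like $t^{\varphi^-}$ near $t=0$ while the reaction term grows only like $t^{q^+}$, with $q^+<\varphi^-$. First I would fix any $\theta\in X$ with $\theta>0$ (for instance a nontrivial nonnegative function supported in $\Omega$, extended by zero to $\R^N$); since $\theta\not\equiv 0$ on $\Omega$ we have $\int_\Omega|\theta|^{q(x)}\,dx>0$, and this strict positivity is the only property of $\theta$ that will be used. For $t\in(0,1)$ small enough we have $\|t\theta\|=t\|\theta\|<1$ by homogeneity of the norm $\|\cdot\|$, so the subcritical modular estimate applies on the whole ray considered.

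Next I would bound the elliptic part from above. Because the integrands defining $I_1$ and the modular $\rho_s$ are nonnegative and $I_1$ differs from $\rho_s$ only by the factor $\tfrac12$ on the Gagliardo term, one has $I_1(u)\le\rho_s(u)$ for every $u\in X$. Combining this with Proposition \ref{Nmod}, relation (\ref{Nmod2}), and the homogeneity of $\|\cdot\|$, I obtain, for $t\|\theta\|<1$,
\begin{equation*}
I_1(t\theta)\ \le\ \rho_s(t\theta)\ \le\ \|t\theta\|^{\varphi^-}\ =\ t^{\varphi^-}\|\theta\|^{\varphi^-}.
\end{equation*}
For the reaction part, the growth condition \hyperref[f2]{$(f_2)$} gives $F(x,t\theta)\ge c_2\,t^{q(x)}|\theta|^{q(x)}\ge c_2\,t^{q^+}|\theta|^{q(x)}$ for $t\in(0,1)$, since $q(x)\le q^+$ together with $t<1$ forces $t^{q(x)}\ge t^{q^+}$; integrating over $\Omega$ yields $\int_\Omega F(x,t\theta)\,dx\ge c_2\,t^{q^+}\int_\Omega|\theta|^{q(x)}\,dx$.

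Putting the two estimates together I get
\begin{equation*}
J_\lambda(t\theta)\ \le\ t^{\varphi^-}\|\theta\|^{\varphi^-}-\lambda c_2\,t^{q^+}\!\int_\Omega|\theta|^{q(x)}\,dx
\ =\ t^{q^+}\Bigl(t^{\varphi^--q^+}\|\theta\|^{\varphi^-}-\lambda c_2\!\int_\Omega|\theta|^{q(x)}\,dx\Bigr).
\end{equation*}
Since $\varphi^--q^+>0$, the bracket tends to $-\lambda c_2\int_\Omega|\theta|^{q(x)}\,dx<0$ as $t\to0^+$, hence it is strictly negative for all sufficiently small $t>0$, giving $J_\lambda(t\theta)<0$ and proving the lemma. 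The only genuinely delicate point is ensuring that the modular estimate produces the exponent $\varphi^-$ and not $\varphi^+$: this is precisely why one must remain in the regime $\|t\theta\|<1$ and invoke (\ref{Nmod2}) rather than (\ref{Nmod1}). Everything else follows directly from \hyperref[f2]{$(f_2)$} and the strict gap $q^+<\varphi^-$.
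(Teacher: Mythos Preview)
Your proof is correct and follows the same approach as the paper: bound $I_1(t\theta)$ from above via the modular--norm inequality of Proposition~\ref{Nmod}, bound $\int_\Omega F(x,t\theta)\,dx$ from below using \hyperref[f2]{$(f_2)$}, and compare the resulting powers $t^{\varphi^-}$ and $t^{q^+}$. The paper constructs $\theta$ explicitly as a smooth cutoff supported in a ball inside $\Omega$ (invoking Theorem~\ref{TT} to guarantee $\theta\in X$), whereas you take a generic nonnegative $\theta\in X$ with $\int_\Omega|\theta|^{q(x)}\,dx>0$; this is the same in spirit, though you may wish to cite Theorem~\ref{TT} or note that $C_0^\infty(\Omega)\subset X$ to be fully explicit about the existence of such a $\theta$. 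Your argument is in fact a bit more careful than the paper's in singling out the regime $\|t\theta\|<1$ so that (\ref{Nmod2}) applies and yields the exponent $\varphi^-$ rather than $\varphi^+$.
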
 
      \begin{proof} Let $\Omega_0\subset \subset \Omega$, for $x_0\in \Omega_0$, $0 < R < 1$ satisfy $B_{2R}(x_0)\subset \Omega_0$, where $B_{2R}(x_0)$
               is the ball of radius $2R$ with center at the point $x_0$ in $\R^N$. Let $\theta\in C_0^\infty(B_{2R}(x_0))$ satisfies $0\leqslant \theta \leqslant 1$ and
               $\theta \equiv 1$  in $B_{2R}(x_0)$. Theorem $\ref{TT}$ implies that $||\theta||<\infty.$ Then for $0 < t < 1$, by  $\hyperref[f2]{(f_2)}$, we have    
               $$
                  \begin{aligned}
                         J_\lambda(t\theta)=&
                          \displaystyle\dfrac{1}{2} \int_{\R^{2N}\setminus(C\Omega)^2} \varPhi_{x,y}\left( \dfrac{|t\theta(x)-t\theta(y)|}{|x-y|^s }\right) \dfrac{dxdy}{|x-y|^N} +\int_{\Omega}\varPhi_x(|t \theta|)dx\\
                          &+\int_{C \Omega}\beta(x) \widehat{\varPhi}_{x}(|t\theta|)dx -\lambda\int_{\Omega}F(x,t\theta)dx\\
                         &\leqslant  ||t\theta||^{\varphi^-}-\lambda c_2\int_{\Omega_0} |t\theta|^{q(x)}dx\\
                         &\leqslant t^{\varphi^-}||\theta||^{\varphi^-}-\lambda c_2t^{q\pm}\int_{\Omega_0} |\theta|^{q(x)}dx.
                        \end{aligned}
                        $$
               Since $\varphi^- > q^+$ and $\displaystyle\int_{\Omega_0} |\theta|^{q(x)}dx>0$ we have $J_\lambda(t_0\theta)<0$ for
               $t_0\in(0,t)$ sufficiently small.
                       \end{proof}    
   \begin{lem}\label{lem7}
   Assume the hypotheses of Theorem $\ref{2.1.}$  are fulfilled. Then for any $\lambda>0$ the functional $J_\lambda$ is coercive.
   \end{lem}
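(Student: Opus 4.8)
The plan is to write $J_\lambda = I_1 - \lambda I_2$ and to show that the modular growth of $I_1$, which is of order $\|u\|^{\varphi^-}$ for large $\|u\|$, strictly dominates the subcritical growth of $\lambda I_2$, which is at most of order $\|u\|^{q^+}$ with $q^+ < \varphi^-$. Since coercivity only concerns the regime $\|u\| \to \infty$, I will freely work in the branch $\|u\| > 1$ throughout.

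First I would bound $I_1$ from below. Observing that $I_1$ and $\rho_s$ differ only by the factor $\frac12$ in front of the double integral, and that all three integrands are nonnegative, one has $I_1(u) \geq \frac{1}{2}\rho_s(u)$. Restricting to $\|u\| > 1$ and invoking \eqref{Nmod1} of Proposition \ref{Nmod}, this yields
$$I_1(u) \geq \frac{1}{2}\|u\|^{\varphi^-}.$$

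Next I would estimate $I_2$ using the growth hypothesis. Integrating \eqref{f1} gives $|F(x,u)| \leq c_1 |u|^{q(x)}$, so $I_2(u) \leq c_1\int_{\Omega}|u|^{q(x)}\,dx$. By Remark \ref{rem1}, the embedding $X \hookrightarrow L^{q(x)}(\Omega)$ holds, hence $\|u\|_{q(x)} \leq c\|u\|$ for some $c>0$. Applying the standard modular--norm inequality in the variable-exponent Lebesgue space, namely that $\int_{\Omega}|u|^{q(x)}\,dx \leq \|u\|_{q(x)}^{q^+}$ whenever $\|u\|_{q(x)} > 1$, I obtain, for $\|u\|$ large enough that $\|u\|_{q(x)} > 1$, the bound $I_2(u) \leq c_1 c^{q^+}\|u\|^{q^+}$. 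Combining the two estimates, for all sufficiently large $\|u\|$,
$$J_\lambda(u) \geq \frac{1}{2}\|u\|^{\varphi^-} - \lambda c_1 c^{q^+}\|u\|^{q^+} = \|u\|^{q^+}\left(\frac{1}{2}\|u\|^{\varphi^- - q^+} - \lambda c_1 c^{q^+}\right).$$
Since $\varphi^- - q^+ > 0$, the bracketed factor tends to $+\infty$ as $\|u\| \to \infty$, and therefore so does $J_\lambda(u)$, which is exactly coercivity, and it holds for every fixed $\lambda > 0$.

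The only point requiring genuine care is the bookkeeping across the two distinct modular structures. One must pass through the variable-exponent modular $\int_{\Omega}|u|^{q(x)}\,dx$, which is controlled by $\|u\|_{q(x)}^{q^+}$ only in the region $\|u\|_{q(x)} > 1$, and independently through the Musielak modular $\rho_s$, controlled by $\|u\|^{\varphi^-}$ only for $\|u\| > 1$, before any comparison of exponents is legitimate. Because coercivity is an assertion about $\|u\| \to \infty$, both large-norm branches activate simultaneously, and the strict inequality $q^+ < \varphi^-$ supplied by Remark \ref{rem1} is precisely what guarantees that the positive leading term overwhelms the perturbation.
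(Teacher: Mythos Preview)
Your proof is correct and follows essentially the same approach as the paper: bound $I_1$ from below via the modular--norm inequality $\rho_s(u)\geqslant\|u\|^{\varphi^-}$ for $\|u\|>1$, bound $I_2$ from above via $(f_1)$ and the embedding $X\hookrightarrow L^{q(x)}(\Omega)$, and conclude from $q^+<\varphi^-$. If anything, your version is slightly more careful than the paper's, which silently drops the factor $\tfrac12$ in front of the double integral and writes $\|u\|^{q^\pm}$ without specifying the branch of the $L^{q(x)}$ modular inequality.
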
 
   \begin{proof} For each $u\in X$ with $||u||>1$ and $\lambda>0$, relations $(\ref{mod1})$,  $(\ref{28})$ and the condition \hyperref[f1]{$(f_1)$} imply
      $$
                   \begin{aligned}
                          J_\lambda(u)=&\displaystyle\dfrac{1}{2}  \int_{\R^{2N}\setminus(C\Omega)^2} \varPhi_{x,y}\left( \dfrac{ |u(x)- u(y)|}{|x-y|^s}\right) \dfrac{dxdy}{|x-y|^N}+\int_{\Omega}\widehat{\varPhi}_x\left( |u(x)|\right) dx\\
                                 &+\int_{C \Omega}\beta(x) \widehat{\varPhi}_x\left( |u(x)|\right) dx\\
                          &\geqslant  ||u||^{\varphi^-}-\lambda c_1\int_{\Omega} |u|^{q(x)}dx\\
                          &\geqslant ||u||^{\varphi^-}-\lambda c_1c||u||^{q^\pm}.
                         \end{aligned}
                         $$
    Since $\varphi^->q^+$ the above inequality   implies that $J_\lambda(u)\longrightarrow \infty$ as $||u||\rightarrow \infty$, that is, $J_\lambda$ is coercive.  \end{proof}
         \begin{proof}[\noindent \textbf{Proof of Theorem $\ref{2.1.}$}] Let $\lambda_*>0$ be defined as in $(\ref{29})$ and $\lambda\in (0,\lambda_*)$. By Lemma $\ref{lem5}$ it follows that on the boundary oh the ball centered in the origin and of radius $\rho$ in $X$, denoted by $B_\rho(0)$, we have 
         $$\inf\limits_{\partial B_\rho(0)}J_\lambda>0.$$
      On the other hand, by Lemma $\ref{lem6}$, there exists $\theta \in X$ such that $J_\lambda(t\theta)<0$ for all $t>0$ small enough. Moreover for any $u\in B_\rho(0)$, we have 
      $$
                      \begin{aligned}
                             J_\lambda(u)\geqslant ||u||^{\varphi^+}-\lambda c_1c||u||^q.
                            \end{aligned}
                            $$
      It follows that
      $$-\infty<c:=\inf\limits_{\overline{B_\rho(0)}} J_\lambda<0.$$   
      We let now $0<\varepsilon <\inf\limits_{\partial  B_\rho(0)}  J_\lambda -  \inf\limits_{B_\rho(0)} J_\lambda.$    Applying Theorem $\ref{th1}$ to the functional 
      $J_\lambda : \overline{B_\rho(0)}\longrightarrow \R$, we find $u_\varepsilon \in \overline{B_\rho(0)}$ such that 
       $$
            \left\{ 
                 \begin{array}{clclc}
               J_\lambda(u_\varepsilon)&<\inf\limits_{\overline{B_\rho(0)}} J_\lambda+\varepsilon,& \\\\
                 J_\lambda(u_\varepsilon)&< J_\lambda(u)+\varepsilon ||u-u_\varepsilon||,& \text{  } u\neq u_\varepsilon.
                 \end{array}
                 \right. 
              $$
       Since  $J_\lambda(u_\varepsilon)\leqslant  \inf\limits_{\overline{B_\rho(0)}} J_\lambda+\varepsilon\leqslant \inf\limits_{B_\rho(0)} J_\lambda+\varepsilon \leqslant \inf\limits_{\partial  B_\rho(0)}  J_\lambda$, we deduce $u_\varepsilon  \in B_\rho(0)$. 
       
       Now, we define $\Lambda_\lambda :  \overline{B_\rho(0)}\longrightarrow \R$ by 
       $$\Lambda_\lambda(u)=J_\lambda(u)+\varepsilon||u-u_\varepsilon||.$$
       It's clear that $u_\varepsilon$ is a minimum point of $\Lambda_\lambda$ and then
       $$\dfrac{\Lambda_\lambda(u_\varepsilon+t v)-\Lambda_\lambda(u_\varepsilon)}{t}\geqslant 0$$ 
       for small $t>0$, and any $v\in B_\rho(0).$ The above relation yields 
           $$\dfrac{J_\lambda(u_\varepsilon+t v)-J_\lambda(u_\varepsilon)}{t}+\varepsilon||v||\geqslant 0.$$
           Letting $t\rightarrow 0$ it follows that $\left\langle J'_{\lambda}(u_\varepsilon),v\right\rangle +\varepsilon ||v||>0$ and we infer that $||J'_{\lambda}(u_\varepsilon)||_*\leqslant \varepsilon$.
            We deduce that there exists a sequence $\left\lbrace v_n\right\rbrace \subset B_\rho(0)$ such that 
            \begin{equation}\label{10}
            J_\lambda(v_n) \longrightarrow c \text{ and } J'_\lambda(v_n)\longrightarrow 0.
            \end{equation}
         It is clear that $\left\lbrace v_n\right\rbrace $ is bounded in $X$. Thus, there exists $v\in X$, such that up to a subsequence   $\left\lbrace v_n\right\rbrace $ converges weakly to $v$ in $X$. Since  $X$ is a compactly embedded in $L^{q(x)}(\Omega)$. The above information combined with condition \hyperref[f1]{$(f_1)$} and   H\"older's inequality implies
       \begin{equation}\label{11}
   \begin{aligned}
         \left| \int_{\Omega} f(x,v_n)(v_n-v)dx\right| &\leqslant c_1\int_{\Omega} \left| v_n\right| ^{q(x)-1}\left| v_n-v\right| dx\\   
         &\leqslant c_1\left| \left| |v_n|^{q(x)-1}\right| \right|_{\frac{q(x)}{q(x)-1}}\left| \left| v_n-v\right| \right|_{q(x)} \longrightarrow 0.
         \end{aligned} 
            \end{equation}
    On the other hand, by $(\ref{10})$ we have 
    \begin{equation}\label{12}
    \lim\limits_{n\rightarrow \infty}\left\langle J'_\lambda(v_n) , v_n-v\right\rangle =0.
    \end{equation}  
    Relations $(\ref{11})$ and $(\ref{12})$ imply 
    $$
     \lim\limits_{n\rightarrow \infty}\left\langle I'_1(v_n) , v_n-v\right\rangle =0.$$
     Thus, by Lemma $\ref{4.4.}$ we find that  $\left\lbrace v_n\right\rbrace $ converges strongly to $v$ in $X$, so by $(\ref{10})$: 
     $$ J_\lambda(v)=c<0 \text{ and }  J'_\lambda(v)=0.$$
   We conclude that $v$ is a nontrivial weak solution for problem   \hyperref[P]{$(\mathcal{P}_{a})$} for any $\lambda\in(0,\lambda_*)$.
   
   Next, by Lemma $\ref{lem7}$ and Proposition $\ref{lem3}$ we infer that $J_\lambda$ is coercive and weakly lower semi continuous in $X$ for all $\lambda>0$. Then Theorem $\ref{th2}$ implies that there exists $u_\lambda \in X$ a global minimized of $J_\lambda$ and thus a weak solution of problem \hyperref[P]{$(\mathcal{P}_{a})$}.
   
   Now, we show that $u_\lambda$ is non trivial. Indeed, letting $t_0>1$ be fixed real and
   $$
               \left\{ 
                    \begin{array}{clclc}
                  u_0(x)&=t_0~~\text{ in }  \Omega & \\\\
                    u_0(x)&=0~~\text{ in }  \R^N\setminus \Omega ,&
                    \end{array}
                    \right. 
                 $$
    we have $ u_0\in X$ and 
   $$
   \begin{aligned}
   J_\lambda(u_0)&=I_1(u_0)-\lambda\int_\Omega F(x,u_0)dx\\
   &=\int_\Omega\widehat{\varPhi}_x(t_0)dx-\lambda\int_\Omega F(x,t_0)dx\\
   &\leqslant \int_\Omega \widehat{\varPhi}_x(t_0)dx-\lambda c_2\int_\Omega |t_0|^{q(x)} dx\\
   &=L-\lambda c_2|t_0|^{q^-}|\Omega|,
   \end{aligned}
   $$
   where $L$ is a positive constant. Thus, for $\lambda^*>0$ large enough, $J_\lambda(u_0)<0$ for any $\lambda\in [\lambda^*,\infty)$. It follows that $J_\lambda(u_\lambda)<0$ for any $\lambda\in [\lambda^*,\infty)$ and thus $u_\lambda$ is a nontrivial weak solution of problem  \hyperref[P]{$(\mathcal{P}_{a})$} for any $\lambda\in [\lambda^*,\infty)$. Therefore, problem  \hyperref[P]{$(\mathcal{P}_{a})$} has a nontrivial weak solution for all $\lambda\in(0,\lambda_*)\cup [\lambda^*,\infty)$.\end{proof}
       \section{Examples}  \label{S5} 
       
       In this section we point certain examples of functions $\varphi_{x,y}$ and $\varPhi_{x,y}$  which illustrate the results of this paper.
       \begin{ex}
       	As a first example, we can take
       $$\varphi_{x,y}(t)=p(x,y)|t|^{p(x,y)-2}t ~~\text{and} ~~\varPhi_{x,y}=|t|^{p(x,y)},  ~~\text{ for all}~~ t\geqslant 0,$$  
where $p\in C(\overline{\Omega}\times\overline{\Omega})$ satisfies $2\leqslant p(x,y) <N$  for all $(x,y)\in  \overline{\Omega}\times\overline{\Omega}$.
\end{ex} In this case the problem \hyperref[P]{$(\mathcal{P}_a)$} reduces to the following fractional $p(x,.)$-Laplacian problem  
$$
 (\mathcal{P}_1) \label{P1} \hspace*{0.5cm} \left\{ 
   \begin{array}{clclc}

(-\Delta_{p(x,.)})^{s} u +|u|^{\overline{p}(x)}u& = &  f(x,u)   & \text{ in }& \Omega \\\\
  \mathcal{N}^s_{p(x,.)}u(x)+\beta(x)|u|^{\overline{p}(x)}u & = & 0 \hspace*{0.2cm} \hspace*{0.2cm} & \text{ in } & \R^N\setminus \Omega,
   \end{array}
   \right. 
$$ 
where
$\bar{p}(x)=p(x,x)$ for all $x\in \overline{\Omega}.$
Here, the operator $(-\Delta_{p(x,.)})^{s}$ is the  fractional $p(x,.)$-Laplacian operator defined as follows 
$$(-\Delta_{p(x,.)})^{s}u(x)=p.v.\int_{\Omega}\frac{|u(x)-u(y)|^{p(x,y)-2}(u(x)-u(y))}{|x-y|^{N+sp(x,y)}}~dy~~~~~~ \text{ for all  }  x \in \Omega,$$
 and $\mathcal{N}^s_{p(x,.)}$ is the $p(.,.)$-Neumann boundary condition defined  by
 $$
      \begin{aligned}
      \mathcal{N}^s_{p(x,.)}u(x)= \int_{\Omega}  \dfrac{|u(x)-u(y)|^{p(x,y)}(u(x)-u(y))}{|x-y|^{N+sp(x,y)} }dy,~~\forall x\in \R^N\setminus \Omega.
                \end{aligned}
                 $$
 It easy to see that $\varPhi_{x,y}$ is a Musielak function and   satisfy conditions \hyperref[v1]{$(\varPhi_1)$}-\hyperref[v3]{$(\varPhi_3)$}. In this case we can take $\varphi^-=p^-$ and $\varphi^+=p^+$. 
 Then, we can extract the following result
 \begin{rem}\label{c1}
 	Assume that $f$ satisfies \hyperref[f1]{$(f_1)$} and \hyperref[f2]{$(f_2)$}. If $p^->q^+$. Then, problem \hyperref[P1]{$(\mathcal{P}_1)$}  has a nontrivial weak solution.
 \end{rem}

\begin{ex}
	As a second example, we can take

       $$\varphi_{x,y}(t)=\varphi_{1}(x,y,t) =p(x,y)\dfrac{|t|^{p(x,y)-2}t}{\log (1+|t|)}  ~~\text{ for all}~~ t\geqslant 0,$$ 
       and thus, 
    $$\varPhi_{x,y}(t)=p(x,y)\dfrac{|t|^{p(x,y)}}{\log (1+|t|)}+\int_{0}^{|t|}\dfrac{\tau^{p(x,y)}}{(1+\tau)(\log(1+\tau))^2}d\tau,$$     
    with $p\in C(\overline{\Omega}\times\overline{\Omega})$ satisfies $2\leqslant p(x,y) <N$ for all $(x,y)\in  \overline{\Omega}\times\overline{\Omega}$.
\end{ex}
    Then, in this case problem  \hyperref[P]{$(\mathcal{P}_a)$} becomes 
    $$\label{P2}
     (\mathcal{P}_{2}) \hspace*{0.5cm} \left\{ 
       \begin{array}{clclc}
    
    (-\Delta_{\varphi_{1}})^{s} u +\dfrac{\overline{p}(x)|u|^{\overline{p}(x)-2}u}{\log(1+|u|)}& = &  \lambda f(x,u)   & \text{ in }& \Omega \\\\
       \mathcal{N}^s_{\varphi_1}u(x)+\beta(x)\dfrac{\overline{p}(x)|u|^{\overline{p}(x)-2}u}{\log(1+|u|)} & = & 0 \hspace*{0.2cm} \hspace*{0.2cm} & \text{ in } & \R^N\setminus \Omega,
       \end{array}
       \right. 
    $$ 
   with 
    $$(-\Delta_{\varphi_{1}})^{s}u(x)=p.v.\int_{\Omega}\dfrac{p(x,y)|D^su|^{p(x,y)-2}D^su}{\log(1+|D^su|)|x-y|^{N+s}}~dy~~~~~~ \text{ for all  }  x \in \Omega,$$  
  and 
    $$ \mathcal{N}^s_{\varphi_1}u(x)=\int_{\Omega}\dfrac{p(x,y)|D^su|^{p(x,y)-2}D^su}{\log(1+|D^su|)|x-y|^{N+s}}~dy~~~~~~ \text{ for all  }   x \in \R^N\setminus \Omega.$$  
      It easy to see that $\varPhi_{x,y}$ is a Musielak function and satisfy condition \hyperref[v3]{$(\varPhi_3)$}.
    Moreover, for each $(x,y)\in  \overline{\Omega}\times\overline{\Omega}$ fixed, by Example 3 on p 243 in \cite{cl}, we have 
    $$p(x,y)-1\leqslant \dfrac{t\varphi_{x,y}(t)}{\varPhi_{x,y}(t)}\leqslant p(x,y) ~~\forall (x,y) \in \overline{\Omega}\times\overline{\Omega},~~\forall t\geqslant 0.$$
    Thus, \hyperref[v1]{$(\varPhi_1)$} holds true with $\varphi^-=p^--1$ and $\varphi^+=p^+$.\\
    Finally, we point out that trivial computations imply that 
    $$\dfrac{d^2(\varPhi_{x,y}(\sqrt{t}))}{dt^2}\geqslant 0$$
    for all $(x,y)\in \overline{\Omega}\times \overline{\Omega}$ and $t\geqslant 0$. Thus, relation \hyperref[f2.]{$(\varPhi_2)$} hold true.

    Hence, we derive an existence result for  problem \hyperref[P2]{$(\mathcal{P}_{2})$} which is given by the following Remark.
   \begin{rem}\label{c2}
   	Assume that $f$ satisfies \hyperref[f1]{$(f_1)$} and \hyperref[f2]{$(f_2)$}. If $p^--1>q^+$. Then, problem \hyperref[P2]{$(\mathcal{P}_2)$}  has a nontrivial weak solution. 
   \end{rem}
   \begin{ex}
   	As a third example, we can take  
     $$\varphi_{x,y}(t)=\varphi_2(x,y,t)=p(x,y)\log(1+\alpha+|t|)|t|^{p(x,y)-2}t ~~\text{for all}~~ t\geqslant 0$$
     and  so,
      $$\varPhi_{x,y}(t)=\log(1+|t|)|t|^{p(x,y)}-\int_{0}^{|t|}\dfrac{\tau^{p(x,y)}}{1+\tau}d\tau,$$
      where  $p\in C(\overline{\Omega}\times\overline{\Omega})$ satisfies $2\leqslant p(x,y) <N$ for all $(x,y)\in  \overline{\Omega}\times\overline{\Omega}$.\end{ex}
      
       Then we consider the following fractional $p(x,.)$-problem  
          \begingroup\makeatletter\def\f@size{10}\check@mathfonts $$
          \label{P3} (\mathcal{P}_2)~~  \left\{ 
             \begin{array}{clclc}
          
          (-\Delta_{\varphi_2})^{s} u +\overline{p}(x)\log(1+\alpha+|u|)|u|^{\overline{p}(x)-2}u& = &  f(x,u)   & \text{ in }& \Omega \\\\
             \mathcal{N}^s_{\varphi_2}u(x)+\beta(x)\overline{p}(x)\log(1+\alpha+|u|)|u|^{\overline{p}(x)-2}u & = & 0 ~~  & \text{ in } & \R^N\setminus \Omega,
             \end{array}
             \right. 
          $$ \endgroup
          where 
          $$(-\Delta_{\varphi_2})^{s}u(x)=p.v.\int_{\Omega}\dfrac{p(x,y)\log(1+\alpha+|D^su|).|D^su|^{p(x,y)-2}D^su}{|x-y|^{N+s}}~dy~$$
           for all  $x \in \Omega,$ and 
           
           $$\mathcal{N}^s_{\varphi_2}u(x)=\int_{\Omega}\dfrac{p(x,y)\log(1+\alpha+|D^su|).|D^su|^{p(x,y)-2}D^su}{|x-y|^{N+s}}~dy~$$
      for all  $x \in \R^N\setminus \Omega.$
           
       It easy to see that $\varPhi_{x,y}$ is a Musielak function and satisfy condition \hyperref[v3]{$(\varPhi_3)$}.
       Next, we remark that   for each $(x,y)\in  \overline{\Omega}\times\overline{\Omega}$ fixed,   we have 
       $$p(x,y)\leqslant \dfrac{t\varphi_{x,y}(t)}{\varPhi_{x,y}(t)} ~~\text{for all}~~  t\geqslant 0.$$
 By the above information and  taking $\varphi^-=p^-$, we have 
 $$1<p^-\leqslant \dfrac{t.\varphi_{x,y}(t)}{\varPhi_{x,y}(t)}  \text{  }~~\text{for all}~~(x,y)\in  \overline{\Omega}\times\overline{\Omega} ~~ \text{   }\text{ and all } t\geqslant 0.$$   
  On the other hand, some simple computations imply 
  $$\lim\limits_{t \rightarrow \infty }\dfrac{t.\varphi_{x,y}(t)}{\varPhi_{x,y}(t)} =p(x,y)\text{  }\text{ for all }(x,y)\in \overline{\Omega}  \times\overline{\Omega},$$  
  and 
  $$\lim\limits_{t \rightarrow 0 }\dfrac{t.\varphi_{x,y}(t)}{\varPhi_{x,y}(t)} =p(x,y)+1 \text{  }\text{ for all }(x,y)\in \overline{\Omega}  \times\overline{\Omega},$$
  Thus, we remark that $\dfrac{t.\varphi_{x,y}(t)}{\varPhi_{x,y}(t)}$ is continuous on $\overline{\Omega}  \times\overline{\Omega}\times [0,\infty)$. Moreover, $$1<p^-\leqslant \lim\limits_{t \rightarrow 0 }\dfrac{t.\varphi_{x,y}(t)}{\varPhi_{x,y}(t)}\leqslant p^++1<\infty,$$ and $$1<p^-\leqslant \lim\limits_{t \rightarrow \infty }\dfrac{t.\varphi_{x,y}(t)}{\varPhi_{x,y}(t)}\leqslant p^++1<\infty.$$ It follows that 
  $$\varphi^+<\infty.$$
  We conclude that relation \hyperref[v1]{$(\varPhi_1)$} is satisfied.  Finally, we point out that trivial computations imply that 
      $$\dfrac{d^2(\varPhi_{x,y}(\sqrt{t}))}{dt^2}\geqslant 0$$
      for all $(x,y)\in \overline{\Omega}\times \overline{\Omega}$ and $t\geqslant 0$. Thus, relation \hyperref[f2.]{$(\varPhi_2)$} hold true.

 \begin{rem}\label{c3}
    	Assume that $f$ satisfies \hyperref[f1]{$(f_1)$} and \hyperref[f2]{$(f_2)$}. If $p^->q^+$. Then, problem \hyperref[P3]{$(\mathcal{P}_3)$}  has a nontrivial weak solution.
    \end{rem}



\begin{thebibliography}{}
%
%
 \bibitem{1} R. A. Adams, Sobolev Spaces, Academic Press, New York, 1975.
  \bibitem{benkirane}    E. Azroul , A. Benkirane, M. Shimi and M. Srati (2020): \textit{On a class of
  nonlocal problems in new fractional Musielak-Sobolev spaces}, \textit{Applicable Analysis}, DOI:
  10.1080/00036811.2020.1789601.
  
   \bibitem{benkirane2}    E. Azroul , A. Benkirane , M. Shimi and M. Srati (2020): \textit{Embedding and extension results in fractional Musielak–Sobolev spaces, Applicable
   Analysis}, \textit{Applicable Analysis}, DOI:
   10.1080/00036811.2021.1948019.

   \bibitem{sr5} E. Azroul, A. Benkirane, M. Srati, \textit{Nonlocal eigenvalue type problem in fractional Orlicz-Sobolev space}, \textit{Adv. Oper. Theory} (2020) doi: 10.1007/s43036-020-00067-5.
   
  \bibitem{3} E. Azroul, A. Benkirane, M.Srati, \textit{Existence of solutions for a nonlocal type problem in fractional Orlicz Sobolev spaces}, \textit{Adv. Oper. Theory}  (2020) doi: 10.1007/s43036-020-00042-0.
  
   \bibitem{SRN1} E. Azroul, A. Benkirane, M. Srati, \textit{Eigenvalue problem associated with nonhomogeneous integro-differential operators}  \textit{J. Elliptic Parabol Equ} (2021). https://doi.org/10.1007/s41808-020-00092-8.
     
     \bibitem{SRN2}    E. Azroul, A. Benkirane and M. Srati, \textit{Mountain pass type solutions for a nonlacal fractional $a(.)$-Kirchhoff type problems}, \textit{Journal of Nonlinear Functional Analysis}, Vol. 2021 (2021), Article ID 3, pp. 1-18.
     
        \bibitem{SRT}    E. Azroul, A. Benkirane, M. Srati, and C. Torres, \textit{Infinitely many solutions for a nonlocal type problem with sign-changing weight function}. \textit{ Electron. J. Differential Equations}, Vol. \textbf{2021} (2021), No. 16, pp. 1-15.  
        
\bibitem{SH} E. Azroul, A. Benkirane and M. Shimi,  Existence and Multiplicity of solutions for fractional $p(x,.)$-Kirchhoff type problems in $\mathbb{R}^N$, Applicable Analysis, (2019), DOI:10.1080/00036811.2019.1673373.

\bibitem{SH2} E. Azroul,  M. Shimi, Nonlocal eigenvalue problems with variable exponent, \textit{Moroccan J. of Pure and Appl. Anal}, Volume 4(1), 2018, Pages 46-61
            
 
   
    \bibitem{SRH}    E. Azroul, A. Benkirane, M. Shimi and M. Srati, \textit{On a class of fractional $p(x)$-Kirchhoff type
     problems}. \textit{
     Applicable Analysis} (2019) doi: 10.1080/00036811.2019.1603372.
     
    \bibitem{SS2020}  E. Azroul, A. Benkirane, M. Shimi, M. Srati, \textit{Three solutions for fractional $p(x,.)$-Laplacian Dirichlet problems with weight}, \textit{Journal of Nonlinear Functional Analysis}, Vol. 2020 (2020), Article ID 22, pp. 1-18.
    
     \bibitem{athman}  E. Azroul, A. Boumazourh,  \textit{Three solution for a fractional $(p (x,.), q(x,.))$-Kirchhoff type elliptic systeme}, \textit{Journal of Nonlinear Functional Analysis}, Vol. 2020 (2020), Article ID 40, pp. 1-19.
    
  \bibitem{N6} A. Bahrouni, V.  Radulesc\u{u}, and P. Winkert, \textit{Robin fractional problems with symmetric
   variable growth}, J. Math. Phys. 61, 101503 (2020); doi: 10.1063/5.0014915.
   
 \bibitem{N7}   S. Bahrouni and A. Salort \textit{Neumann and Robin type boundary conditions in Fractional Orlicz-Sobolev
   spaces} 
    
     \bibitem{N1} G. Barles, E. Chasseigne,  C. Georgelin,  and  E. R. Jakobsen,  \textit{On Neumann
    type problems for nonlocal equations in a half space}. Trans. Amer. Math. Soc. 366
    (2014), no. 9, 4873-4917.
    
    
    \bibitem{N2} G. Barles, C. Georgelin,  and  E. R. Jakobsen,  \textit{On Neumann and oblique
    derivatives boundary conditions for nonlocal elliptic equations}. J. Differential Equations 256 (2014), no. 4, 1368–1394.
    
      \bibitem{sr_mo}   A. Boumazourh  and M. Srati, \textit{Leray-Schauder's solution for a nonlocal problem in a
                fractional Orlicz-Sobolev space}. \textit{Moroccan J. of Pure and Appl. Anal. (MJPAA)}  doi: 10.2478/mjpaa-2020-0004 (2020) 42-52.  

  \bibitem{sal1} J. F. Bonder and A. M. Salort, \textit{Fractional order Orlicz-Soblev spaces}, 	Journal of Functional
               Analysis, 2019, https://doi.org/10.1016/j.jfa.2019.04.003.\bibitem{sal2} J.F. Bonder, M.P. Llanos, A.M. Salort, A H\"older infinity Laplacian obtained as limit of Orlicz fractional Laplacians,
               arXiv:1807.01669.     

   \bibitem{cl}  Ph. Cl\'ement, B. de Pagter, G. Sweers, F. de Th\'{e}lin, \textit{Existence of solutions to a semilinear elliptic system through
           Orlicz-Sobolev spaces}, Mediterr. J. Math. 1 (2004) 241-267.
           
  
  
\bibitem{N3} C.  Cortazar, M. Elgueta, J. D. Rossi,   and N. Wolanski,  \textit{How to approximate the heat equation with Neumann boundary conditions by nonlocal diffusion
problems}. Arch. Ration. Mech. Anal. 187 (2008), no. 1, 137-156.

 
    \bibitem{N5}  S. Dipierro, X. Ros-Oton, and E. Valdinoci, \textit{Nonlocal problems with Neumann boundary conditions}, Rev. Mat. Iberoam. 33(2), 377–416 (2017)   
         

 
\bibitem{ek}  I. Ekeland
          On the variational principle
          J. Math. Anal. Appl., 47 (1974), pp. 324-353. 
          
          
     \bibitem{maria1}     S. Gala, Q. Liu, M.A. Ragusa, \textit{A new regularity criterion for the nematic liquid crystal flows}, Applicable Analysis 91 (9), 1741-1747 (2012).
     
           \bibitem{maria2}  S. Gala, M.A. Ragusa, \textit{Logarithmically improved regularity criterion for the Boussinesq equations in Besov spaces with negative indices}, Applicable Analysis 95 (6), 1271-
          1279 (2016);
           
            
 \bibitem{ku} U. Kaufmann, J. D. Rossi, and R. Vidal, Fractional Sobolev spaces with variable exponents and fractional $p(x)$-Laplacians, \textit{Elec. Jour. of Qual. Th, of Diff. Equa.} 76 (2017), 1-10.

  
  
 \bibitem{Lam} J. Lamperti, \textit{On the isometries of certain function-spaces}, Pacific J. Math. \textbf{8} (1958),
           459-466. 
  
   \bibitem{ra} M. Mih\"ailescu, V. R\"adulescu, Neumann problems associated to nonhomogeneous differential operators in Orlicz-Soboliv spaces, Ann. Inst. Fourier 58 (6) (2008) 2087-2111.
   
  \bibitem{N4}  D. Mugnai and E. Proietti Lippi, \textit{Neumann fractional p-Laplacian: Eigenvalues and existence results}, Nonlinear Anal. 188, 455-474 (2019).
  
    \bibitem{mu} J. Musielak; Orlicz Spaces and Modular Spaces, Lecture Notes in Mathematics, Vol. 1034,
   Springer, Berlin, 1983.
   
  
  \bibitem{e2}  M. Ruzicka; Electrorheological fluids: modeling and mathematical theory., Lecture Notes in
  Mathematics, Springer, Berlin, 2000.
  \bibitem{e3}  P. Perona, J. Malik; Scale-space and edge detection using anisotropic diffusion, IEEE Trans. Pattern Anal. Machine Intell., 12 (1990), 629-639.
  
      
       \bibitem{110}     M. Struwe, \textit{Variational Methods: Applications to Nonlinear Partial Differential Equations and Hamiltonian Systems},
                          Springer-Verlag, Berlin, Heidelberg, 1990.
\end{thebibliography}
\end{document}